\theoremstyle{plain}
\newtheorem{thm}[subsection]{Theorem}
\newtheorem{prop}[subsection]{Proposition}
\newtheorem{cor}[subsection]{Corollary}
\newtheorem{lem}[subsection]{Lemma}
\theoremstyle{definition}
\newtheorem{defn}[subsection]{Definition}
\newcommand{\Q}{\mathbb Q}
\newcommand{\C}{\mathbb C}
\newcommand{\cO}{{\mathcal O}_K}
\newcommand{\cE}{{\cal E}_1^*}
\newcommand{\cM}{{\mathcal M}_n}
\newcommand{\Z}{\mathbb Z}
\begin{document}

\title{On the 2-part of the Birch-Swinnerton-Dyer conjecture for elliptic curves with complex multiplication}

\author{John Coates,
Minhyong Kim \thanks{ Supported by EPSRC grant EP/G024979/2},
Zhibin Liang \thanks{Supported by NSFC11001183 and NSFC11171231.},
Chunlai Zhao\thanks{Supported by NSFC01272499.}}

\maketitle

 \newblock{\it To Peter Schneider for his 60th birthday}

\section{Introduction}

Let $E$ be an elliptic curve defined over $\Q$, with complex multiplication by the ring of integers of an imaginary quadratic field $K$. Thus, by the theory of complex multiplication, $K$ must be either $\Q(\sqrt{-1}), \Q(\sqrt{-2}), \Q(\sqrt{-3})$, or one of the fields
\begin{equation}\label{kay}
 \Q(\sqrt{-q}) \, \, (q=7, 11, 19, 43, 67, 163).
\end{equation}
Recently, Y. Tian \cite{T1}, \cite{T2} made the remarkable discovery that one could prove deep results about the arithmetic of certain quadratic twists of $E$ with root number $-1$, by combining formulae of Gross-Zagier type for these twists,  with a weak form of the 2-part of the conjecture of Birch and Swinnerton-Dyer for certain other  quadratic twists of $E$, where the root number is $+1$. We recall that, when the complex $L$-series of an elliptic curve with complex multiplication does not vanish at $s=1$, the $p$-part of the conjecture of Birch and Swinnerton-Dyer has been established,  by the methods of Iwasawa theory, for all primes $p$ which do not divide the order of the group of roots of unity of $K$ (see \cite{R}). However, at present we do not know how to extend such methods to cover the case of the prime $p=2$. Nevertheless, when $K = \Q(\sqrt{-1})$, one of us \cite{Z1}, \cite{Z2}, \cite{Z3}, \cite{Z4} did establish a weaker result in this direction for the prime $p=2$, by combining the classical expression for the value of the complex $L$-series as a sum of Eisenstein series (see Corollary \ref{z}), with an averaging argument over quadratic twists, and happily this weaker result has sufficed for Tian's work in \cite{T1}, \cite{T2}. The aim of the present note is to show that the rather elementary method developed in the
papers \cite{Z1}, \cite{Z2}, \cite{Z3}, \cite{Z4} works even more simply for quadratic twists of those elliptic curves $E$ having good reduction at the prime 2, and with complex multiplication by the ring of integers of the fields $K$ given by \eqref{kay}. We hope that one can use some of the weak forms of the 2-part of the conjecture of Birch and Swinnerton-Dyer established here (see, in particular, our Corollary \ref{ap}) to extend the deep results of \cite{T1}, \cite{T2}, \cite{T3},  to certain infinite families of quadratic twists of our curves $E$, having root number equal to $-1$ .  It is also interesting to note that, in \cite{T3}, Tian and his collaborators introduce a new and completely different method for establishing weak forms of the 2-part  part of the conjecture of Birch and Swinnerton-Dyer for curves with $K = \Q(\sqrt{-1})$, by using a celebrated formula of Waldspurger, and they believe that this new method can eventually be applied to a much wider class of elliptic curves, including those without complex multiplication. Needless to say, the rather elementary methods used here seem to be special to elliptic curves with complex multiplication. Finally, we wish to thank Y. Tian for his ever helpful comments on our work.

\section{The averaging argument}

Let  $K$ be an imaginary quadratic field of class number 1, which we assume is embedded in $\C$, and let $ \mathcal{O}_K$ its ring of integers. Let  $E$ be any elliptic curve defined over $K$, whose endomorphism ring is isomorphic to $\mathcal{O}_K$.  Fix once and for all a global minimal generalized Weierstrass equation for $E$ over $\mathcal{O}_K$
\begin{equation}\label{1}
 y^2+a_1xy+a_3y =
x^3+a_2x^2+a_4x+a_6 \, \, (a_i\in\mathcal{O}_K).
\end{equation}
Let $\frak L$ be the period lattice of the Neron differential
$dx/(2y+a_{1}x+a_{3})$.  Then $\frak L$ is a free
$\mathcal{O}_{K}$-module of rank 1, and we fix $\Omega_{\infty}\in\mathbb{C}^{\times}$ such that
$\frak {L}=\Omega_{\infty}\mathcal{O}_{K}$. Denote by $\psi_E$ the Grossencharacter of $E/K$ in the sense of Deuring-Weil, and write  $\frak f$ for the conductor of  $\psi_E$
(thus the prime divisors of $\frak f$ are precisely  the primes of $K$ where $E$ has bad reduction).
Now let $\frak g$ be any integral multiple of $\frak f$, and fix $g \in \cO$ such that $\frak g = g\cO$.
Let $S$ be the set of primes ideals of $K$ dividing $\frak g$, and denote by
$$
L_{S}(\bar{\psi}_{E},s)=\displaystyle\sum_{(\mathfrak{a}, \frak g)=1}\frac{\bar{\psi}_{E}(\mathfrak{a})}{(N\mathfrak{a})^{s}}
$$
the imprimitive Hecke $L$-function of the complex conjugate Grossencharacter of  ${\psi}_{E}$.
Our subsequent induction argument is based on the following expression for $L_{S}(\bar{\psi}_{E},s)$, which goes back to the 19th century. Let $z$ and $s$ be complex variables. For  any lattice $L$ in the complex plane $\mathbb{C}$, define the Kronecker-Eisenstein series by
$$
H_{1}(z,s,L):=\sum_{w\in L}\frac{\bar{z}+\bar{w}}{|z+w|^{2s}},
$$
where the sum is taken over all $w \in L$, except  $-z$ if $z \in $L. This series converges to define a holomorphic function of $s$ in the half plane $Re(s)>3/2$, and it has an analytic continuation to the whole $s$-plane. Let $\frak R$ denote the ray class field of $K$ modulo $\frak g$, and let $\cal B$
be any set of integral ideals of $K$, prime to $\frak g$, whose Artin symbols give precisely the Galois group of $\frak R$ over $K$ (in other words, $\cal B$ is a set of integral ideals of $K$ representing the ray class group of $K$ modulo $\frak g$). Since the conductor of  $\psi_E$ divides
$\frak g$, it is well known that $\frak R$ is equal to the field $K(E_g)$, which is obtained by adjoining to $K$ the coordinates of the $g$-division points on $E$.

\begin{prop}\label{2}  We have
\[ L_{S}(\bar{\psi}_E,s) =
\frac{|\Omega_\infty/g|^{2s}}{\overline{(\Omega_\infty/g)}}\displaystyle\sum_{\frak b\in\cal{B}}H_1(\psi_E(\frak b)\Omega_\infty/g,s,\frak{L}).\]
\end{prop}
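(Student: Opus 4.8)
\emph{Strategy.} The plan is to evaluate the right-hand side by expanding each Kronecker--Eisenstein series and identifying the resulting lattice sum with the Dirichlet series defining $L_S(\bar\psi_E,s)$, all in the half-plane $\mathrm{Re}(s)>3/2$ of absolute convergence, and then to pass to all $s$ by analytic continuation. First I would compute the right-hand side. Putting $\mathfrak L=\Omega_\infty\cO$ and $z=\psi_E(\mathfrak b)\Omega_\infty/g$ into the definition of $H_1$ and writing a lattice point as $\Omega_\infty n$ with $n\in\cO$, every summand has the form $z+\Omega_\infty n=(\Omega_\infty/g)\bigl(\psi_E(\mathfrak b)+gn\bigr)$, so the common factor $\Omega_\infty/g$ comes out of the numerator and of the modulus; after multiplying by the prefactor $|\Omega_\infty/g|^{2s}/\overline{(\Omega_\infty/g)}$ one is left with $\sum_{n\in\cO}\overline{\psi_E(\mathfrak b)+gn}\,\bigl|\psi_E(\mathfrak b)+gn\bigr|^{-2s}$. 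No term is dropped here: since $\mathfrak b$ is prime to $\mathfrak g=(g)$ one has $\psi_E(\mathfrak b)/g\notin\cO$, that is $z\notin\mathfrak L$. Because $\psi_E(\mathfrak b)$ is a generator of the integral ideal $\mathfrak b$ and $|\cdot|^2=N$ on $K$, this sum equals $\sum_\gamma\bar\gamma/(N\gamma)^s$, the sum being over those $\gamma\in\cO$ with $\gamma\equiv\psi_E(\mathfrak b)\pmod g$. Hence the right-hand side equals $\sum_{\mathfrak b\in\cal B}\ \sum_{\gamma\equiv\psi_E(\mathfrak b)\ (g)}\bar\gamma/(N\gamma)^s$.

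Next I would bring the left-hand side to the same shape. I shall use the standard description of the Deuring--Weil Grossencharacter: for $\alpha\in\cO$ prime to $\mathfrak f$ one has $\psi_E((\alpha))=\varepsilon(\alpha)\,\alpha$, where $\varepsilon$ is a character of $(\cO/\mathfrak f)^\times$ with values in $\cO^\times$ satisfying $\varepsilon(u)=u^{-1}$ for $u\in\cO^\times$; in particular $\psi_E((\alpha))=\alpha$ when $\alpha\equiv1\pmod{\mathfrak f}$, and $\psi_E(\mathfrak b)$ is the unique generator of $\mathfrak b$ on which $\varepsilon$ takes the value $1$. Since $K$ has class number one, each integral ideal prime to $\mathfrak g$ has exactly $|\cO^\times|$ generators in $\cO$, and $\bar\psi_E((\alpha))/(N\alpha)^s=\overline{\varepsilon(\alpha)}\,\bar\alpha/(N\alpha)^s$ does not depend on which generator is taken; therefore $L_S(\bar\psi_E,s)=|\cO^\times|^{-1}\sum_{(\alpha,g)=1}\overline{\varepsilon(\alpha)}\,\bar\alpha/(N\alpha)^s$, the sum over all $\alpha\in\cO$ prime to $g$. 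I would then group the $\alpha$ according to their class in $(\cO/g)^\times$ -- on which $\varepsilon$ is constant, as $\mathfrak f\mid\mathfrak g$ -- which reduces everything to the structure of this group.

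The key step is the claim that $(\mathfrak b,u)\mapsto u\,\psi_E(\mathfrak b)\bmod g$ is a bijection from $\cal B\times\cO^\times$ onto $(\cO/g)^\times$. Since $K$ has class number one, the ray class group of $K$ modulo $\mathfrak g$ is the quotient of $(\cO/g)^\times$ by the image of $\cO^\times$, and $\cal B$ represents it; moreover $\cO^\times$ injects into $(\cO/g)^\times$, because a unit $u\equiv1\pmod{\mathfrak f}$ satisfies $u^{-1}=\varepsilon(u)=1$. Comparing cardinalities, and noting that $u\,\psi_E(\mathfrak b)\equiv u'\psi_E(\mathfrak b')\pmod g$ can hold only if $\mathfrak b$ and $\mathfrak b'$ lie in the same ray class (hence $\mathfrak b=\mathfrak b'$, hence $u=u'$), gives the bijection. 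Re-indexing the $\alpha$-sum by pairs $(\mathfrak b,u)$ and writing $\alpha=u\gamma$ with $\gamma\equiv\psi_E(\mathfrak b)\pmod g$, one gets $\varepsilon(\alpha)=\varepsilon(u)\,\varepsilon(\psi_E(\mathfrak b))=u^{-1}$, so $\overline{\varepsilon(\alpha)}=u$, while $\bar\alpha=\bar u\,\bar\gamma$, $N\alpha=N\gamma$ and $u\bar u=1$; the summand collapses to $\bar\gamma/(N\gamma)^s$, and the prefactor $|\cO^\times|^{-1}\sum_{u\in\cO^\times}1$ becomes $1$. What remains is precisely $\sum_{\mathfrak b\in\cal B}\sum_{\gamma\equiv\psi_E(\mathfrak b)\ (g)}\bar\gamma/(N\gamma)^s$, the expression obtained above for the right-hand side. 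Finally, $L_S(\bar\psi_E,s)$ is entire ($\bar\psi_E$ being a nontrivial Hecke character) and $H_1$ continues analytically to $\C$, so the identity established for $\mathrm{Re}(s)>3/2$ persists for all $s$.

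The one point that really needs care is the unit bookkeeping in the previous paragraph: one must check that no nontrivial unit of $K$ is congruent to $1$ modulo $\mathfrak f$, equivalently that $|(\cO/g)^\times|=|\cO^\times|\cdot|\cal B|$, since it is exactly this that lets the factor $|\cO^\times|^{-1}$ coming from counting generators cancel the summation over $\cO^\times$. Everything else is a routine rearrangement of an absolutely convergent double series, together with the classical facts about $\psi_E$.
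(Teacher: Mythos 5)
Your argument is correct and is in substance the same as the paper's own proof: both rest on parametrizing the integral ideals prime to $\frak g$ by a ray-class representative $\frak b \in \cal B$ together with a residue class modulo $\frak g$, which in turn rests on exactly the two facts you isolate --- that no nontrivial unit of $K$ is $\equiv 1 \bmod \frak f$ (a consequence of the existence of $\psi_E$, in your language $\varepsilon(u)=u^{-1}$), and that $\psi_E$ takes the value $\gamma$ on any principal ideal $(\gamma)$ with $\gamma \equiv \psi_E(\frak b) \bmod \frak g$. The only cosmetic difference is that you sum over all $|\mu_K|$ generators of each ideal and cancel the resulting factor $|\mu_K|^{-1}$, whereas the paper chooses once and for all the canonical generator $\psi_E(\frak b)+c$ and observes that these run over the ideals prime to $\frak g$ exactly once.
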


\begin{proof}
As mentioned above $\cal B$ is a set of integral representatives of the ray class group of $K$ modulo $\frak g$, and so it follows that, fixing any generator of each $\frak b$ in $\cal B$, we obtain a set of representatives of
$(\mathcal{O}/\frak{g})^*/\tilde{\mu}_K,$  where $\tilde{\mu}_K$ denotes the image under reduction modulo $\frak{g}$ of the group $\mu_K$ of roots of unity of $K$. Moreover, the very existence of $\psi_E$ shows that the reduction map from $\mu_K$ to $\tilde{\mu}_K$ must be an isomorphism of groups. For each $\frak b$ in $\cal B$, we choose the generator of $\frak b$ given by $\psi_E(\frak b)$.  It follows that, as $\frak b$ runs over $\cal B$ and $c$ runs over $\frak g$,
the principal ideals $(\psi_E(\frak b)+c)$ run over all integral ideals of $K$, prime to $\frak g$, precisely once. Thus
$$
L_{S}(\bar{\psi}_E,s) =
\sum_{\frak b\in\cal{B}}\sum_{c\in \frak{g}}\frac{\bar{\psi}_E((\psi_E(\frak b)+c))}{|\psi_E(\frak b)+c|^{2s}}.
$$
Note that, since $c \in \frak{g}$, we have
$$
(\psi_E(\frak b)+c) = (\psi_E(\frak b))(1+ c/{\psi_E(\frak b)}) =
\frak{b}(1+ c/{\psi_E(\frak b)}),
$$
so that
$$
\psi_E((\psi_E(\frak b)+c)) =
\psi_E(\frak b)(1+ c/{\psi_E(\frak b)}) = \psi_E(\frak b)+c.
$$
Hence
 $$
L_{S}(\bar{\psi}_E,s) =
\sum_{\frak b \in B}\sum_{c \in \frak{g}}\frac{\overline{\psi_E(\frak b)+c}}{|\psi_E(\frak b)+c|^{2s}},
$$
which can easily be rewritten as
$$
\frac{|\Omega_{\infty}/g|^{2s}}{\overline{(\Omega_{\infty}/g})}\sum_{\frak {b} \in\cal B}\sum_{w \in \frak{L}}\frac{\overline{\psi_{E}\frak b)\Omega_{\infty}/g+w}}{|\psi_{E}(\frak b)\Omega_{\infty}/g+w|^{2s}} \, ,
$$
completing the proof of the theorem. \end{proof}

We recall that, for any lattice $L$, the non-holomorphic Eisenstein series ${\cal E}_1^*(z, L)$ is defined by
$$
\cE(z, L) = H_1(z, 1, L).
$$
Then the above proposition immediately implies that
$$
L_{S}(\bar{\psi}_E,1)/\Omega_\infty = g^{-1}\sum_{\frak {b} \in \cal B}\cE(\psi_E(\frak b)\Omega_\infty/g, \frak L).
$$
Also, it is well known (see, for example, \cite{GS}) that $\cE(\psi_E(\frak b)\Omega_\infty/g, \frak L)$ belongs to the field $\frak R$, and satisfies
$$
\cE(\psi_E(\frak b)\Omega_\infty/g, \frak L) = \cE(\Omega_\infty/g, \frak L)^{\sigma_{\frak b}},
$$
where $\sigma_{\frak b}$ denotes the Artin symbol of $\frak b$ in the Galois group of $\frak R$ over $K$. Thus the above proposition has the following immediate corollary, where  $Tr_{\frak R/K}$ denotes the trace map from $\frak R$ to $K$.

\begin{cor}\label{z} We have
$$
L_{S}(\bar{\psi}_E,1)/\Omega_\infty = Tr_{\frak R/K}(g^{-1}\cE(\Omega_\infty/g, \frak L)).
$$
\end{cor}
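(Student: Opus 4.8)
The plan is to specialise Proposition~\ref{2} to $s=1$ and then recognise the resulting finite sum as a trace. First I would note that the archimedean prefactor collapses at $s=1$: writing $w=\Omega_\infty/g$ and using $|w|^2 = w\bar w$, one has
\[
\frac{|\Omega_\infty/g|^{2}}{\overline{(\Omega_\infty/g)}} \;=\; \frac{w\bar w}{\bar w} \;=\; \frac{\Omega_\infty}{g}.
\]
Substituting this into the identity of Proposition~\ref{2}, dividing both sides by $\Omega_\infty$, and using the defining relation $\cE(z,L)=H_1(z,1,L)$, we obtain
\[
L_{S}(\bar\psi_E,1)/\Omega_\infty \;=\; g^{-1}\sum_{\frak b\in\cal B}\cE(\psi_E(\frak b)\Omega_\infty/g,\frak L),
\]
which is exactly the intermediate formula recorded in the text just before the statement.

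Next I would invoke the two standard properties of the non-holomorphic Eisenstein series $\cE$ recalled above (see \cite{GS}): for each $\frak b\in\cal B$ the value $\cE(\psi_E(\frak b)\Omega_\infty/g,\frak L)$ lies in the ray class field $\frak R=K(E_g)$, and it equals $\cE(\Omega_\infty/g,\frak L)^{\sigma_{\frak b}}$, where $\sigma_{\frak b}$ is the Artin symbol of $\frak b$. By construction $\cal B$ is chosen so that $\{\sigma_{\frak b}:\frak b\in\cal B\}$ is precisely $\mathrm{Gal}(\frak R/K)$, each element occurring exactly once; hence the inner sum equals $\sum_{\sigma\in\mathrm{Gal}(\frak R/K)}\cE(\Omega_\infty/g,\frak L)^{\sigma} = Tr_{\frak R/K}(\cE(\Omega_\infty/g,\frak L))$.

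Finally, since $g\in\cO\subset K$ the scalar $g^{-1}$ is fixed by $\mathrm{Gal}(\frak R/K)$, so by $K$-linearity of the trace it may be moved inside, giving
\[
L_{S}(\bar\psi_E,1)/\Omega_\infty \;=\; Tr_{\frak R/K}\!\bigl(g^{-1}\cE(\Omega_\infty/g,\frak L)\bigr),
\]
as asserted. There is essentially no obstacle in this deduction itself: the corollary is a purely formal consequence of Proposition~\ref{2} and the Galois behaviour of $\cE$. The genuinely substantive ingredient, which is imported rather than proved here, is precisely that Galois-equivariance together with the rationality assertion $\cE(\psi_E(\frak b)\Omega_\infty/g,\frak L)\in\frak R$; reproving those from scratch — via the theory of elliptic units, or directly from the transformation behaviour of the Kronecker-Eisenstein series under the action of the idele class group — would be the only part of the argument requiring real work.
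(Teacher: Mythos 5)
Your proposal is correct and follows exactly the route the paper takes: specialise Proposition~\ref{2} at $s=1$ (where the prefactor collapses to $\Omega_\infty/g$), then use the quoted facts from \cite{GS} that each $\cE(\psi_E(\frak b)\Omega_\infty/g,\frak L)$ lies in $\frak R$ and equals $\cE(\Omega_\infty/g,\frak L)^{\sigma_{\frak b}}$, together with the fact that the Artin symbols of $\cal B$ exhaust $\mathrm{Gal}(\frak R/K)$, to identify the sum as $Tr_{\frak R/K}$. This matches the paper's deduction, so there is nothing further to add.
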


We next consider the twisting of $E$ by certain quadratic extensions of $K$. A non-zero element
$M$ of $\mathcal{O}_K$ is said to be square free if it is not divisible by the square of any non-unit element of this ring.

\begin{lem} Let $M$ be any non-zero and non-unit element of  $\mathcal{O}_K$ , which satisfies
(i) $M$ is square free, (ii) $M$ is prime to the discriminant of $K$, and (iii) $M \equiv 1 \, \, mod \, 4.$
Then the extension $K(\sqrt M)/K$ has conductor equal to $M\mathcal{O}_K$.
\end{lem}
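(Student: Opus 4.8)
The plan is to use the conductor-discriminant formula together with the classical formula for the conductor of a Kummer extension of a local field at the primes above $2$, reducing the global statement to a purely local computation. First I would observe that since $K$ has class number $1$ and $M$ is square free, the extension $K(\sqrt{M})/K$ is ramified exactly at those primes $\mathfrak p$ of $K$ which either divide $M$ or lie above $2$; conditions (ii) and (iii) are precisely designed to control the behaviour at the primes above $2$. For a prime $\mathfrak p \nmid 2$ dividing $M$ exactly once, the local extension $K_{\mathfrak p}(\sqrt{M})/K_{\mathfrak p}$ is tamely and (since $M$ is a uniformizer times a unit) nontrivially ramified, so its conductor exponent is $1$, contributing exactly $\mathfrak p$ to the global conductor.

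The main point is the behaviour at a prime $\mathfrak p \mid 2$. Here the standard local theory of quadratic extensions of a $2$-adic field (for instance via the filtration of $K_{\mathfrak p}^{\times}$ and the explicit description of $K_{\mathfrak p}^{\times 2}$, or equivalently via Artin–Schreier–Witt / the conductor of a ramified quadratic character) tells us that a unit $u \in \mathcal{O}_{K_{\mathfrak p}}^{\times}$ is a square precisely when $u \equiv \square \pmod{\mathfrak p^{2e+1}}$ where $e = v_{\mathfrak p}(2)$, and that for $u$ congruent to a square modulo $\mathfrak p^{2e}$ but not a full square the extension $K_{\mathfrak p}(\sqrt u)/K_{\mathfrak p}$ is unramified. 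Since $M$ is prime to the discriminant of $K$, each $\mathfrak p \mid 2$ has $e = 1$, and condition (iii), $M \equiv 1 \pmod 4$, says exactly that $M$ is a square modulo $\mathfrak p^{2e}$; hence $K_{\mathfrak p}(\sqrt M)/K_{\mathfrak p}$ is either trivial or unramified, and in both cases contributes trivially to the conductor. (That $M$ is a nonsquare in $K$, so that the extension is genuinely quadratic, follows from condition (i) together with the fact that $M$ is a non-unit.)

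Having pinned down every local conductor, I would assemble the global conductor $\mathfrak f(K(\sqrt M)/K)$ as the product over ramified $\mathfrak p$ of the local conductor exponents; this gives exactly $\prod_{\mathfrak p \mid M} \mathfrak p = M\mathcal{O}_K$, using square-freeness of $M$ so that each such $\mathfrak p$ occurs to the first power. I expect the genuinely delicate step to be the local analysis at $\mathfrak p \mid 2$: one must be careful that $e=1$ really forces the dividing line between ramified and unramified to sit at $\mathfrak p^{2}=4\mathcal{O}_{K_{\mathfrak p}}$, and that $M\equiv 1 \pmod 4$ lands on the safe side of it. Everything away from $2$ is routine tame ramification, and the passage from local data to the global conductor is just the conductor–discriminant formalism.
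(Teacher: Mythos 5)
Your argument is correct in its conclusion and follows the same global skeleton as the paper's proof: the extension $K(\sqrt M)/K$ is (totally, tamely) ramified with conductor exponent $1$ at each of the primes dividing the square free element $M$, and the whole point is to show that the primes of $K$ above $2$ do not ramify, after which the conductor is $M\mathcal{O}_K$. Where you differ is in how the primes above $2$ are treated: you quote the standard local criterion that a unit congruent to a square modulo $\mathfrak{p}^{2e}$ (with $e=v_{\mathfrak p}(2)$) generates an unramified or trivial extension, whereas the paper argues directly by adjoining $z=(\sqrt M-1)/2$, observing that $z$ is a root of $f(X)=X^2-X-(M-1)/4\in\mathcal{O}_K[X]$ because $M\equiv 1\bmod 4$, and that $f'(z)=2z-1$ is a unit above $2$, so the different is prime to $2$. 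This explicit substitution is of course the usual proof of the local criterion you invoke, so the two treatments amount to the same computation, yours packaged as a citation and the paper's made self-contained and elementary.

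One step of your write-up is wrong as stated: the assertion that condition (ii), namely that $M$ is prime to the discriminant of $K$, forces $e=v_{\mathfrak p}(2)=1$ at every $\mathfrak p\mid 2$ is a non sequitur --- whether $2$ ramifies in $K$ depends on $K$ alone, not on $M$, and the lemma as placed in Section 2 allows $K=\Q(\sqrt{-1})$ or $\Q(\sqrt{-2})$, where $e=2$. The slip happens to be harmless, because for any $\mathfrak p\mid 2$ one has $\mathfrak p^{2e}=4\mathcal{O}_{K_{\mathfrak p}}$, so the hypothesis $M\equiv 1\bmod 4$ lands $M$ on the unramified side of your dividing line for every value of $e$; the correct fix is simply to drop the appeal to (ii) (which is not used in the paper's proof of this lemma either) and phrase the local condition directly in terms of congruence modulo $4\mathcal{O}_{K_{\mathfrak p}}$.
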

\begin{proof} Since $M$ is square free and $M \equiv 1 \, \, mod \, 4$,
the extension  $K(\sqrt M)/K$ is totally and tamely ramified at all primes dividing $M$. Thus the assertion of the lemma will follow once we have shown that the primes of $K$ above 2 are not ramified in this extension. Let $v$ be any place of $K$ above 2. Let $w$ be such that $w^2 = M$, and put $z = (w-1)/2$. Then $z$ is a root of the polynomial $f(X) = X^2 - X - (M-1)/4$, so that $z$ is an algebraic integer. But $f'(z) = 2z-1$ is then clearly a unit at $v$, and so $v$ is unramified in our extension $K(\sqrt M)/K$, completing the proof.
\end{proof}

Let $M$ be as in the above lemma, and assume in addition that $(M, \frak f)=1$. We write $\chi_M$ for the abelian character of $K$ defining the quadratic extension $K(\sqrt M)/K$, and let $E^{(M)}$ denote the twist of $E$ by $\chi_M$. Thus
$E^{(M)}$ is the unique elliptic curve defined over $K$, which is isomorphic to $E$ over $K(\sqrt M)$, and which is such that
$$
E^{(M)}(K) = \{P \in E(K(\sqrt M))\, :   \sigma(P) = \, \chi_M(\sigma)(P), \, \sigma \in Gal(K(\sqrt M)/K)    \}.
$$
The curve $E^{(M)}$ also has endomorphism ring isomorphic to $\cO$, and its Grossencharacter,
which we denote by $\psi_{E^{(M)}}$, is equal to the product $\psi_E \chi_M$.  We write $\frak f_M$
for the conductor of  $\psi_{E^{(M)}}$. In view of the above lemma, we have  $\frak f_M = M\frak f$, because $(\frak f, M)=1$ and $\chi_M$ has conductor $M\cO$.  Finally, putting
$$
\frak p(z, \frak L) = x + (a_1^2 + 4a_2)/12, \, {\frak p}'(z, \frak L) = 2y + a_1x + a_3,
$$
we obtain a classical Weierstrass equation for $E$ over $\C$ of the form
$$
Y^2 = 4X^3 - g_2(\frak L)X - g_3(\frak L),
$$
with $X = \frak p(z, \frak L), \, Y = {\frak p}'(z, \frak L)$. The corresponding classical Weierstrass equation for $E^{(M)}$ over $\C$ is then given by
$$
Y^2 = 4X^3 - M^2g_2(\frak L)X - M^3g_3(\frak L).
$$
Hence the period lattice for the curve  $E^{(M)}$ over $\C$ is given by
\begin{equation} \label{6}
{\frak L}_M = \frac{ \Omega_{\infty}}{\sqrt M}\cO.
\end{equation}

\medskip

We now suppose that we are given an infinite sequence
$$
\pi_1, \pi_2, \ldots, \pi_n, \ldots
$$
of distinct prime elements of $\cO$. We shall say that this sequence is {\it admissible} for $E/K$  if, for all $n \geq 1$, we have that $\pi_n$ is  prime to the discriminant of  $K$, and
\begin{equation} \label{8}
\pi_n \equiv 1 \, \, mod \, 4,  \, \, \, \,   (\pi_n, \frak f) = 1.
\end{equation}
For each integer $n \geq 0$, define
\begin{equation}\label{3}
\cM = \pi_1\cdots \pi_n, \, \, \, \frak g_n = \cM \frak f.
\end{equation}
We now take $\frak R_n$ to be the ray class field of $K$ modulo $\frak g_n$.  Since  $\pi_j \equiv 1 \, \, mod \, 4$, the above lemma shows that the extension $K(\sqrt{\pi_j})/K$ has conductor $\pi_j \cO$, and so is contained in $\frak R_n$, for all $j$ with
$1\leq j \leq n$. Hence the field $\frak J_n$ defined by
\begin{equation}\label{4}
\frak J_n = K(\sqrt{\pi_1},..., \sqrt{\pi_n})
\end{equation}
is always a subfield of $\frak R_n$. Let $S_n$ be the set of prime ideals of $K$ dividing $\frak g_n$. Also, writing $f$ for any $\cO$ generator of the ideal $\frak f$, we put $g_n = f\cM$, so that $\frak g_n = g_n\cO$. Finally, we define $\mathcal D_{n}$ to be the set of all divisors of $\cM$ which are given by the product of any subset of $\{\pi_1,..., \pi_n\}$. The averaging theorem which follows is essentially contained in the earlier paper of one of us \cite{Z1}, and is the basis of all of our subsequent arguments. For simplicity, we write just $\psi_M$ for the Grossencharacter of the curve $E^{(M)}$ for any $M \in \mathcal D_n$.

\begin{thm}\label{av} Let $\{\pi_1,..., \pi_n,...\}$ be any admissible sequence for $E/K$. Then, for all integers $n \geq 1$, we have
\begin{equation} \label{5}
\sum_{M \in {\mathcal D}_{n}} L_{S_n}(\bar{\psi}_M, 1)/{\Omega_{\infty}}  = 2^nTr_{\frak R_n/\frak J_n}({g_n}^{-1}\cE(\Omega_\infty/g_n, \frak L)),
\end{equation}
where $Tr_{\frak R_n/\frak J_n}$ denotes the trace map from $\frak R_n$ to $\frak J_n$.
\end{thm}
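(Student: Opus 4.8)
The plan is to derive \eqref{5} by applying Corollary \ref{z} to each twisted curve $E^{(M)}$ with $M \in \mathcal D_n$, and then collecting the resulting traces into a single trace from $\frak R_n$ down to $\frak J_n$. First I would fix $M \in \mathcal D_n$ and note that, by \eqref{6}, the period lattice of $E^{(M)}$ is $\frak L_M = (\Omega_\infty/\sqrt M)\cO$, so the corresponding period is $\Omega_\infty/\sqrt M$; moreover, since $M \mid \cM$, the conductor $\frak f_M = M\frak f$ divides $\frak g_n = \cM\frak f$, so $\frak g_n$ is an admissible modulus for $E^{(M)}$ and $\frak R_n$ is the associated ray class field. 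Applying Corollary \ref{z} to $E^{(M)}$ with the choice $\frak g = \frak g_n$, $g = g_n$, gives
\[
L_{S_n}(\bar\psi_M, 1)/(\Omega_\infty/\sqrt M) = Tr_{\frak R_n/K}\bigl(g_n^{-1}\cE\bigl((\Omega_\infty/\sqrt M)/g_n, \frak L_M\bigr)\bigr).
\]

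The next step is to rewrite the argument of $\cE$ using the homogeneity property of the Kronecker--Eisenstein series: for $\lambda \in \C^\times$ one has $H_1(\lambda z, s, \lambda L) = \bar\lambda\,|\lambda|^{-2s}H_1(z,s,L)$, hence at $s=1$, $\cE(\lambda z, \lambda L) = \lambda^{-1}\cE(z,L)$ (using $\bar\lambda/|\lambda|^2 = \lambda^{-1}$). Taking $\lambda = 1/\sqrt M$ and noting $\frak L_M = (1/\sqrt M)\frak L$, we get
\[
\cE\bigl((\Omega_\infty/\sqrt M)/g_n, \frak L_M\bigr) = \cE\bigl((1/\sqrt M)(\Omega_\infty/g_n), (1/\sqrt M)\frak L\bigr) = \sqrt M\,\cE(\Omega_\infty/g_n, \frak L).
\]
Substituting back and cancelling the $\sqrt M$ on both sides, we obtain the clean identity
\[
L_{S_n}(\bar\psi_M, 1)/\Omega_\infty = Tr_{\frak R_n/K}\bigl(g_n^{-1}\cE(\Omega_\infty/g_n, \frak L)\bigr),
\]
which is remarkably independent of $M$. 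Summing over the $2^n$ divisors $M \in \mathcal D_n$ then gives the left side of \eqref{5} equal to $2^n\,Tr_{\frak R_n/K}(g_n^{-1}\cE(\Omega_\infty/g_n, \frak L))$.

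It remains to replace $Tr_{\frak R_n/K}$ by $Tr_{\frak R_n/\frak J_n}$, i.e. to show
\[
Tr_{\frak R_n/K}\bigl(g_n^{-1}\cE(\Omega_\infty/g_n,\frak L)\bigr) = Tr_{\frak R_n/\frak J_n}\bigl(g_n^{-1}\cE(\Omega_\infty/g_n,\frak L)\bigr).
\]
This is where I expect the one genuine subtlety to lie, and where I would be most careful. The point is that $Tr_{\frak R_n/K} = Tr_{\frak J_n/K}\circ Tr_{\frak R_n/\frak J_n}$, so the two traces agree precisely when the element $Tr_{\frak R_n/\frak J_n}(g_n^{-1}\cE(\Omega_\infty/g_n,\frak L)) \in \frak J_n$ is actually fixed by $Gal(\frak J_n/K)$, i.e. lies in $K$; but the claim of the theorem is not that the two traces are equal for this reason — rather, one must reconcile the normalisation. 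Let me reconsider: in fact the correct reading is that the sum over $M \in \mathcal D_n$ naturally reorganises as a trace from $\frak R_n$ to $\frak J_n$ of a single term, not $2^n$ copies of a full trace to $K$. The cleaner route, which I would adopt, is: by Proposition \ref{2} applied to $E$ with modulus $\frak g_n$, each individual summand $L_{S_n}(\bar\psi_M,1)/\Omega_\infty$ equals $g_n^{-1}\sum_{\frak b}\cE(\psi_E(\frak b)\Omega_\infty/g_n,\frak L)$ where $\frak b$ runs over a set of representatives; the Galois action $\cE(\psi_E(\frak b)\Omega_\infty/g_n,\frak L) = \cE(\Omega_\infty/g_n,\frak L)^{\sigma_{\frak b}}$ turns this into $Tr_{\frak R_n/K}$, and since the value is independent of $M$, summing over $\mathcal D_n$ multiplies by $2^n = [\frak J_n:K]$. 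Finally $Tr_{\frak R_n/K} = Tr_{\frak J_n/K}\circ Tr_{\frak R_n/\frak J_n}$ combined with $2^n = [\frak J_n : K]$ and the Galois-invariance of $Tr_{\frak R_n/\frak J_n}(g_n^{-1}\cE(\Omega_\infty/g_n,\frak L))$ under $Gal(\frak J_n/K)$ — which holds because conjugating by $\sigma \in Gal(\frak J_n/K)$ permutes the cosets of $Gal(\frak R_n/\frak J_n)$ and hence fixes the sub-trace — yields $2^n Tr_{\frak R_n/K}(\cdots) = 2^n Tr_{\frak J_n/K}(Tr_{\frak R_n/\frak J_n}(\cdots))$; but $Tr_{\frak J_n/K}$ of a $Gal(\frak J_n/K)$-invariant element is multiplication by $[\frak J_n:K] = 2^n$, which would give an extra $2^n$. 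So the invariance argument must be applied in the opposite direction: one shows instead that $2^n\sum_{\frak b}(\cdots)^{\sigma_{\frak b}}$, with $\frak b$ ranging over representatives of $Gal(\frak R_n/K)$, equals $\sum_{\frak b'}(\cdots)^{\sigma_{\frak b'}}$ with $\frak b'$ ranging over representatives of $Gal(\frak R_n/\frak J_n)$ repeated appropriately. The honest main obstacle is therefore the precise bookkeeping identifying how the $2^n$-fold multiplicity from summing over $\mathcal D_n$ exactly matches the index $[\frak J_n:K]$, so that the full trace to $K$ collapses to the partial trace to $\frak J_n$; I would handle this by checking the statement first in the case $n=1$, where $\frak J_1 = K(\sqrt{\pi_1})$ has degree $2$ over $K$ and the two divisors are $M = 1$ and $M = \pi_1$, and then inducting.
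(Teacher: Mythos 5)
There is a genuine error, and it occurs exactly at the step you flag as ``remarkably independent of $M$''. After applying Corollary \ref{z} to $E^{(M)}$ and the homogeneity relation (both of which you do correctly), the factor $\sqrt M$ sits \emph{inside} the trace: what you actually have is
\[
L_{S_n}(\bar\psi_M,1)\sqrt M/\Omega_\infty \;=\; Tr_{\frak R_n/K}\bigl(g_n^{-1}\sqrt M\,\cE(\Omega_\infty/g_n,\frak L)\bigr)\;=\;\sum_{\sigma\in G_n}(\sqrt M)^{\sigma}\,g_n^{-1}\bigl(\cE(\Omega_\infty/g_n,\frak L)\bigr)^{\sigma},
\]
and since $\sqrt M$ lies in $\frak J_n\subset\frak R_n$ but not in $K$ (for $M\neq 1$), the Galois conjugates act nontrivially on it; you cannot pull it out of $Tr_{\frak R_n/K}$ and cancel it against the $\sqrt M$ on the left. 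The ``clean identity'' $L_{S_n}(\bar\psi_M,1)/\Omega_\infty=Tr_{\frak R_n/K}(g_n^{-1}\cE(\Omega_\infty/g_n,\frak L))$ is false: it would force all the imprimitive twisted $L$-values for $M\in\mathcal D_n$ to coincide, and it is precisely the source of the spurious extra factor $2^n$ you then wrestle with when trying to pass from $Tr_{\frak R_n/K}$ to $Tr_{\frak R_n/\frak J_n}$. Your attempted repair at the end (reorganising cosets, checking $n=1$, inducting) does not identify the missing ingredient and is not carried out.

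The correct continuation keeps the conjugated factor: dividing by $\sqrt M$ gives
\[
L_{S_n}(\bar\psi_M,1)/\Omega_\infty \;=\; \sum_{\sigma\in G_n}(\sqrt M)^{\sigma-1}\,g_n^{-1}\bigl(\cE(\Omega_\infty/g_n,\frak L)\bigr)^{\sigma},
\]
where each $(\sqrt M)^{\sigma-1}=\pm1$. One then sums over $M\in\mathcal D_n$ \emph{first} and uses the character-sum (orthogonality) identity: for fixed $\sigma\in G_n$, $\sum_{M\in\mathcal D_n}(\sqrt M)^{\sigma-1}$ equals $2^n$ if $\sigma$ fixes $\frak J_n=K(\sqrt{\pi_1},\dots,\sqrt{\pi_n})$, and equals $0$ otherwise (if $\sigma$ negates exactly $k\geq1$ of the $\sqrt{\pi_i}$, then $2^{n-1}$ of the divisors $M$ have $\sqrt M$ fixed and $2^{n-1}$ have it negated, by the binomial count $\sum_{j\,even}\binom{k}{j}=\sum_{j\,odd}\binom{k}{j}=2^{k-1}$). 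This is exactly how the full trace to $K$ collapses to $2^n\,Tr_{\frak R_n/\frak J_n}$, which is the content of the paper's Lemma \ref{div}; without it, your argument does not close.
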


\begin{proof} Let $M$ be any element of  $\mathcal D_{n}$. Applying Corollary \ref{z}
to the curve $E^{(M)}$ with $\frak g = \frak g_n$, and using \eqref{6}, we conclude that
$$
 L_{S_n}(\bar{\psi}_M, 1)\sqrt{M}/{\Omega_{\infty}} = Tr_{\frak R_n/K}({g_n}^{-1}\cE(\frac{\Omega_\infty}{\sqrt{M}g_n}, \frak L_M)).
 $$
Now, for any non-zero complex number $\lambda$, we have
$$
\cE(z, \frak L_M) = \lambda \cE(\lambda z, \lambda \frak L_M).
$$
Hence, taking $\lambda = \sqrt{M}$, and writing $G_n$ for the Galois group of $\frak R_n/K$, we conclude that
\begin{equation}\label{7}
 L_{S_n}(\bar{\psi}_M, 1)/{\Omega_{\infty}} = \sum_{\sigma \in G_n}(\sqrt{M})^{\sigma - 1}{g_n}^{-1}(\cE(\Omega_\infty/g_n, \frak L))^\sigma.
 \end{equation}
It is now clear that the assertion of the theorem is an immediate consequence of the following lemma.
\end{proof}

\begin{lem}\label{div} Let $H_n = Gal({\frak R_n}/{\frak J}_n)$. If $\sigma$ is any element of $G_n$, then
$\sum_{M \in {\mathcal D}_n}(\sqrt{M})^{\sigma - 1}$ is equal to $2^n$ if $\sigma$ belongs to $H_n$, and is equal to $0$ otherwise.
\end{lem}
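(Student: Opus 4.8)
The plan is to turn the sum over $\mathcal{D}_n$ into a product over the individual primes $\pi_1,\dots,\pi_n$, exploiting the fact that $M\mapsto\sqrt{M}$ is multiplicative along the factorization of $M$. First I would record that, since $K$ has class number one and the $\pi_i$ are distinct primes, the set $\mathcal{D}_n$ is in bijection with the power set of $\{1,\dots,n\}$: every $M\in\mathcal{D}_n$ is $M_I:=\prod_{i\in I}\pi_i$ for a unique subset $I\subseteq\{1,\dots,n\}$, and conversely. Next, for each $i$ with $1\le i\le n$ and each $\sigma\in G_n$, I would observe that $\sqrt{\pi_i}$ lies in $\frak J_n\subseteq\frak R_n$ (this inclusion was established just above using the conductor lemma) and that $\sigma$ fixes $\pi_i\in K$; hence $\sigma(\sqrt{\pi_i})^2=\pi_i$, so that $\sigma(\sqrt{\pi_i})=\varepsilon_i(\sigma)\sqrt{\pi_i}$ for a well-defined sign $\varepsilon_i(\sigma)\in\{\pm1\}$. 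Equivalently, $(\sqrt{\pi_i})^{\sigma-1}=\varepsilon_i(\sigma)$, a quantity independent of the chosen square root of $\pi_i$.

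Choosing $\sqrt{M_I}=\prod_{i\in I}\sqrt{\pi_i}$ (the value of $(\sqrt{M_I})^{\sigma-1}$ being in any case independent of this choice), I would then compute
\[
(\sqrt{M_I})^{\sigma-1} \;=\; \prod_{i\in I}(\sqrt{\pi_i})^{\sigma-1} \;=\; \prod_{i\in I}\varepsilon_i(\sigma),
\]
and therefore
\[
\sum_{M\in\mathcal{D}_n}(\sqrt{M})^{\sigma-1} \;=\; \sum_{I\subseteq\{1,\dots,n\}}\ \prod_{i\in I}\varepsilon_i(\sigma) \;=\; \prod_{i=1}^{n}\bigl(1+\varepsilon_i(\sigma)\bigr),
\]
the last equality being the elementary expansion of a product of binomials.

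To finish, note that each factor $1+\varepsilon_i(\sigma)$ equals $2$ when $\varepsilon_i(\sigma)=1$ and equals $0$ when $\varepsilon_i(\sigma)=-1$. Hence the product above is $2^n$ precisely when $\varepsilon_i(\sigma)=1$ for every $i$, and is $0$ otherwise. Finally, $\varepsilon_i(\sigma)=1$ for all $i$ means exactly that $\sigma$ fixes every $\sqrt{\pi_i}$; since $\sigma$ already fixes $K$, this holds if and only if $\sigma$ fixes the compositum $\frak J_n=K(\sqrt{\pi_1},\dots,\sqrt{\pi_n})$, that is, if and only if $\sigma\in H_n$. This gives the claimed dichotomy. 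I do not expect any serious obstacle: the argument is essentially a generating-function identity, and the only point that genuinely uses the hypotheses is the inclusion $\sqrt{\pi_i}\in\frak R_n$, which makes $\varepsilon_i(\sigma)$ meaningful for $\sigma\in G_n$ and which rests on the admissibility conditions \eqref{8} together with the conductor lemma.
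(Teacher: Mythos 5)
Your proof is correct, and it reaches the conclusion by a slightly different computation than the paper. The paper fixes $\sigma$ and counts: if $\sigma$ sends exactly $k\geq 1$ of the $\sqrt{\pi_i}$ to their negatives, it tallies the number of $M\in\mathcal{D}_n$ with $\sigma(\sqrt{M})=\sqrt{M}$ as $2^{n-k}\bigl(\binom{k}{0}+\binom{k}{2}+\cdots\bigr)=2^{n-1}$ and the number with $\sigma(\sqrt{M})=-\sqrt{M}$ as $2^{n-k}\bigl(\binom{k}{1}+\binom{k}{3}+\cdots\bigr)=2^{n-1}$, so the contributions cancel; the case $\sigma\in H_n$ is treated separately as obvious. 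You instead exploit multiplicativity directly, writing $(\sqrt{M_I})^{\sigma-1}=\prod_{i\in I}\varepsilon_i(\sigma)$ and factoring the whole sum as $\prod_{i=1}^{n}\bigl(1+\varepsilon_i(\sigma)\bigr)$, which yields $2^n$ or $0$ at once and handles both cases of the dichotomy uniformly. The two arguments rest on the same underlying fact (that $\sigma$ acts on each $\sqrt{\pi_i}$ by a well-defined sign, which as you note uses $\sqrt{\pi_i}\in\frak J_n\subseteq\frak R_n$ from the conductor lemma), but your product identity replaces the paper's even/odd binomial count and is arguably the cleaner bookkeeping; the paper's version makes the cancellation between fixed and negated $M$ more explicit. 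One small point you handle correctly and that is worth keeping: the independence of $(\sqrt{M})^{\sigma-1}$ from the choice of square root, which justifies choosing $\sqrt{M_I}=\prod_{i\in I}\sqrt{\pi_i}$.
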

\begin{proof} The first assertion of the lemma is clear.  To prove the second assertion, suppose that
$\sigma$ maps $k \geq 1$ elements of the set  $\{\sqrt{\pi_1},..., \sqrt{\pi_n\}}$ to minus themselves, and write $V(\sigma)$ for the subset consisting of all such elements.  If $M$ be any element of $\mathcal D_n$, it is clear that $\sigma$ will fix $\sqrt M$ if and only if $M$ is a product of an even number of elements of $V(\sigma)$, with an arbitrary number of elements of the complement of $V(\sigma)$ in $\{\sqrt{\pi_1},..., \sqrt{\pi_n}\}$. Hence the total number of $M$ in $\mathcal D_n$ such that $\sigma$ fixes $\sqrt M$ is equal to
$$
2^{n-k}( (k, 0) + (k, 2) + (k, 4) + \ldots) = 2^{n-1},
$$
where $(n,r)$ denotes the number of ways of choosing $r$ objects from a set of $n$ objects. Similarly, the total number of $M$ in $\mathcal D_n$ such that $\sigma$ maps  $\sqrt M$ to $- \sqrt M$ is equal to
$$
2^{n-k}((k,1) + (k, 3) + (k, 5) + \dots) = 2^{n-1}.
$$
Since these last two expressions are equal, the second assertion of the lemma is now clear.
\end{proof}

\section{Integrality at 2}

We use the notation and hypotheses introduced in the last section. Our aim in this section is to prove the following result.

\begin{thm}\label{int} Assume that $E$ has good reduction at the primes of K above 2, and that $\{\pi_1,..., \pi_n,...\}$ is any admissible sequence for $E/K$.  For all $n \geq 1$,  define
$$
\Psi_n =Tr_{\frak R_n/\frak J_n}({g_n}^{-1}\cE(\frac{\Omega_\infty}{g_n}, \frak L)).
$$
Then $2\Psi_n$ is always integral at all places of $\frak J_n$ above 2. Moreover, if the coefficient $a_1$ in \eqref{1} is divisible by 2 in $\cO$, then $\Psi_n$ is integral at all places of $\frak J_n$ above 2.

\end{thm}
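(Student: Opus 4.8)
The plan is to reduce the integrality of $\Psi_n$ (up to the factor $2$) to the integrality properties of the Eisenstein series $\cE(z,\frak L)$ evaluated at division points, using the fact that $E$ has good reduction at $2$. First I would recall the classical formula expressing $\cE(z, \frak L)$ in terms of the Weierstrass $\wp$ and $\zeta$ functions, or equivalently in terms of the coordinates on $E$: when $z = \Omega_\infty/g_n$ corresponds to a $g_n$-division point $P$ on $E$, the quantity $g_n^{-1}\cE(\Omega_\infty/g_n, \frak L)$ can be written as a rational function of the coordinates $(\frak p(z,\frak L), \frak p'(z,\frak L))$ of $P$ together with the coefficients $a_i$ of the minimal Weierstrass equation \eqref{1}. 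Since $E$ has good reduction at every prime $v$ of $K$ above $2$, the division point $P$ reduces to a point on the good reduction $\tilde E$ at $v$; the key point is to control the denominators introduced by this rational expression and by the division, and to show they are supported away from $2$ except for a single possible factor of $2$, which disappears when $a_1 \equiv 0 \bmod 2$.

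The main steps, in order, are as follows. (1) Express $\cE(z,\frak L)$ via the Kronecker–Eisenstein/Weierstrass $\zeta$-function identity $\cE(z,\frak L) = \zeta(z,\frak L) - s_2(\frak L)z - \overline{z}/A(\frak L)$ (area term), then use the quasi-period relations to relate this to the algebraic $x,y$-coordinates; the upshot is a formula of the shape $g_n^{-1}\cE(\Omega_\infty/g_n,\frak L) = (\text{explicit polynomial in } \frak p, \frak p', a_i)/(\text{something prime to }2)$ valid modulo $\frak R_n$. (2) Invoke that $\frak R_n = K(E_{g_n})$, so the $x$- and $y$-coordinates of the relevant division point are algebraic integers at the primes above $2$ — here good reduction at $2$ is exactly what guarantees the reduction map on $g_n$-torsion is injective and the torsion coordinates are $v$-integral, since $g_n = f\cM$ with $\cM$ prime to $2$ and $\frak f$ prime to $2$ (good reduction at $2$). (3) Conclude that $g_n^{-1}\cE(\Omega_\infty/g_n,\frak L)$ is integral at places above $2$ except possibly for a denominator of $2$ coming from the term $(a_1^2+4a_2)/12$ in the definition of $\frak p$ and from the $2$ in the Néron differential $dx/(2y+a_1 x+a_3)$; tracking this shows $2$ times the expression is $v$-integral, and that the factor $2$ is unnecessary once $2 \mid a_1$, since then $2y + a_1 x + a_3$ and the shift $(a_1^2+4a_2)/12$ cause no $2$ in the denominator. (4) Finally, apply the trace map $Tr_{\frak R_n/\frak J_n}$, which preserves integrality at places above $2$ (traces of $v$-integral elements are $v$-integral), to descend the conclusion from $\frak R_n$ to $\frak J_n$.

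The hard part will be step (3): pinning down exactly which power of $2$ can appear in the denominator of $g_n^{-1}\cE(\Omega_\infty/g_n,\frak L)$. One must carefully go through the passage between the non-holomorphic Eisenstein series $\cE(z,\frak L) = H_1(z,1,\frak L)$ and the algebraic (Weierstrass) model, keeping track of the $2$'s in $\frak p' = 2y + a_1x + a_3$, in the factor $1/12$ appearing in $\frak p = x + (a_1^2+4a_2)/12$, and in any $2$-adic unit normalization of $\Omega_\infty$; one also needs that $\cE$ at a torsion point, suitably normalized by $g_n^{-1}$, is (up to the controlled $2$-power) a sum of values of the Weierstrass $\wp$-function at torsion, which are $v$-integral by good reduction. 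I expect that a clean way to organize this is to use the well-known relation $g_n^{-1}\cE(\Omega_\infty/g_n,\frak L) \equiv$ (a $\Z_{(2)}$-combination of Weierstrass coordinates) modulo denominators prime to $2$, so that the only genuine $2$ in the denominator is the one forced by $\frak p'$ having a factor $2y$; this is precisely why the hypothesis $2\mid a_1$ removes it. Everything else — squarefreeness and the congruence $\pi_j\equiv 1 \bmod 4$, which put the ramification away from $2$, and the good reduction hypothesis — is there exactly to make all the other primes harmless.
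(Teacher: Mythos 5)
Your overall skeleton (algebraize $\cE$ at a division point of odd order, invoke good reduction at the places above $2$ for integrality of the coordinates, note that $g_n$ is prime to $2$, and finish by taking the trace, which preserves integrality) is the same as the paper's, but there is a genuine gap at precisely the step you yourself flag as hard, namely your step (3). There is no off-the-shelf formula expressing $g_n^{-1}\cE(\Omega_\infty/g_n,\frak L)$ as a polynomial in $\frak p,\frak p'$ and the $a_i$ divided by something prime to $2$: the classical identity actually available (Damerell, after Swinnerton-Dyer, as recalled in the paper) is $\cE(w,\frak L)=-B_{m-1}(w,\frak L)/m$ for $w$ of exact odd order $m$, where $2B_{m-1}(w,\frak L)=\frak p''(w,\frak L)/\frak p'(w,\frak L)+\sum_{k=2}^{m-2}\bigl(\frak p'(kw,\frak L)-\frak p'(w,\frak L)\bigr)/\bigl(\frak p(kw,\frak L)-\frak p(w,\frak L)\bigr)$. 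This carries a global factor $1/2$ and ratio terms whose denominators (in particular $\frak p'(w,\frak L)=2y(P)+a_1x(P)+a_3$) need not be units above $2$, so termwise integrality is simply not available; asserting that the expression is ``a $\Z_{(2)}$-combination of Weierstrass coordinates modulo denominators prime to $2$'' is assuming exactly what must be proved. The missing idea, which is the heart of the paper's proof, is the addition-formula trick of Corollary \ref{11}: since $\frak p(z_1+z_2,\frak L)+\frak p(z_1,\frak L)+\frak p(z_2,\frak L)$ is the square of the half-ratio, one can rewrite $m\cE(w,\frak L)=\sum_{k=1}^{m-2}\epsilon_k\bigl(\frak p((k+1)w,\frak L)+\frak p(kw,\frak L)+\frak p(w,\frak L)\bigr)^{1/2}$, and each quantity under a square root equals a sum of $x$-coordinates of odd-order torsion points (integral above $2$ by good reduction) plus $(a_1^2+4a_2)/4$; hence it is integral above $2$ when $2\mid a_1$, and becomes integral after multiplying by $4$ in general, so the square root (resp. twice it) is integral above $2$. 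Since $m$ and $g_n$ are odd, the two assertions of the theorem follow conjugate by conjugate and then by taking $Tr_{\frak R_n/\frak J_n}$. Without this device (or an equivalent control of the $1/2$'s and of the denominators $\frak p'(w)$, $\frak p(kw)-\frak p(w)$), your plan stalls at step (3).

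A secondary inaccuracy: your explanation of the dichotomy (``the only genuine $2$ in the denominator is the one forced by $\frak p'$ having a factor $2y$, which the hypothesis $2\mid a_1$ removes'') misidentifies the mechanism. The condition $2\mid a_1$ does nothing to $\frak p'=2y+a_1x+a_3$; in the paper's argument $\frak p'$ has been eliminated entirely by the square-root trick, and $2\mid a_1$ enters only through the shift in $\frak p(z,\frak L)=x+(a_1^2+4a_2)/12$, whose $2$-integrality (equivalently that of $(a_1^2+4a_2)/4$ appearing under the square roots) is exactly equivalent to $2\mid a_1$. You should also make explicit that the smallest positive rational integer $m$ in $\frak g_n$ is odd (this follows from good reduction at $2$ and the admissibility of the $\pi_j$), since this is needed both for the odd order of the points $rP$ and to divide $m\cE(w,\frak L)$ by $m$ at no cost above $2$; your write-up only records that $g_n$ is prime to $2$.
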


\medskip

Before giving the proof of the theorem, we recall some classical identities involving elliptic functions (see. for example, \cite{D}). Let $L$ be any lattice in the complex plane, and write $\frak p(z, L)$ for the Weierstrass $\frak p$ -function attached to $L$. For each integer $m \geq 2$, we define the elliptic function $B_m(z,L)$ by
$$
2B_m(z, L) = \frac{\frak p''(z,L)}{\frak p'(z, L)} + \sum _{k=2}^{k=m-1} \frac{\frak p'(kz, L) -\frak p'(z,L)}{\frak p(kz, L) -\frak p(z,L)}.
$$
\begin{lem} For all integers $m \geq 2$, we have
$$
B_m(z, L) = \cE(mz, L) - m\cE(z, L).
$$
\end{lem}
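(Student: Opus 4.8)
The plan is to reduce the asserted identity to two classical facts about Weierstrass functions. Recall (see, for example, \cite{D}) that, for any lattice $L$ in the complex plane, the difference $\cE(z, L) - \zeta(z, L)$, where $\zeta(z, L)$ is the Weierstrass $\zeta$-function of $L$, is equal to $-s_2(L)\, z - \bar z / A(L)$ with $A(L)$ the area of a fundamental parallelogram of $L$ divided by $\pi$ and $s_2(L)$ a constant depending only on $L$; for the present purpose all that matters is that this difference is a sum of a term linear in $z$ and a term linear in $\bar z$, with coefficients depending only on $L$. Since $m$ is a rational integer we have $\overline{mz} = m\bar z$, so on forming $\cE(mz, L) - m\,\cE(z, L)$ the term linear in $z$ and the term linear in $\bar z$ both cancel, leaving
$$
\cE(mz, L) - m\,\cE(z, L) = \zeta(mz, L) - m\,\zeta(z, L).
$$
Note in passing that the right-hand side, a priori only quasi-periodic in $z$, is in fact elliptic, in accordance with the left-hand side.

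It therefore remains to prove the purely classical identity $\zeta(mz, L) - m\,\zeta(z, L) = B_m(z, L)$. Here I would start from the addition formula for the Weierstrass $\zeta$-function,
$$
\zeta(u + v, L) - \zeta(u, L) - \zeta(v, L) = \frac{1}{2}\cdot\frac{\frak p'(u, L) - \frak p'(v, L)}{\frak p(u, L) - \frak p(v, L)},
$$
put $u = kz$ and $v = z$, and sum over $k = 1, 2, \dots, m - 1$. The left-hand side telescopes to $\zeta(mz, L) - \zeta(z, L) - (m - 1)\zeta(z, L) = \zeta(mz, L) - m\,\zeta(z, L)$. On the right-hand side, the terms with $2 \le k \le m - 1$ reproduce exactly $\tfrac{1}{2}\sum_{k=2}^{m-1}\bigl(\frak p'(kz, L) - \frak p'(z, L)\bigr)/\bigl(\frak p(kz, L) - \frak p(z, L)\bigr)$, while the $k = 1$ term is the confluent case $u = v = z$ of the addition formula, which by l'Hôpital's rule equals $\tfrac{1}{2}\,\frak p''(z, L)/\frak p'(z, L)$ (this is the duplication formula $\zeta(2z, L) - 2\zeta(z, L) = \tfrac{1}{2}\,\frak p''(z, L)/\frak p'(z, L)$). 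Thus the right-hand side is precisely $B_m(z, L)$ as defined above, and combining with the previous paragraph yields the lemma; note that the case $m = 2$ is included, the sum over $k$ then consisting of the single confluent term.

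The argument is entirely classical and I do not anticipate a genuine obstacle; the two points needing attention are to invoke the $\cE(z, L)$--$\zeta(z, L)$ relation in a form making the cancellation of the linear and anti-holomorphic contributions transparent (which is where $m \in \Z$ is used), and to recognise the $k = 1$ summand of the telescoped sum as the limiting form of the addition formula. If one preferred to avoid the $\zeta$-function altogether, an alternative would be to check directly that $B_m(z, L)$ and $\cE(mz, L) - m\,\cE(z, L)$ are elliptic functions of $z$ with the same poles — simple poles at the non-trivial $m$-division points of $\C/L$ with residue $1/m$, and a simple pole at the points of $L$ with residue $1/m - m$ — and agreeing at one auxiliary point; but verifying the pole structure of $B_m(z, L)$ after the cancellations among its constituents essentially amounts to re-deriving the $\zeta$-identity, so the telescoping computation above is shorter.
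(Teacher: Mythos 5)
Your proof is correct and is essentially the paper's argument: both reduce $\cE(mz,L)-m\cE(z,L)$ to $\zeta(mz,L)-m\zeta(z,L)$ via the relation $\cE(z,L)=\zeta(z,L)-zs_2(L)-\bar z A(L)^{-1}$ (using $m\in\Z$ for the cancellation), and both then apply the $\zeta$ addition formula with the confluent limit $\tfrac12\frak p''/\frak p'$ for the $k=1$ term. Your telescoping sum over $k=1,\dots,m-1$ is just the unwound form of the paper's induction on $m$, so there is no substantive difference.
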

\begin{proof} Let $\zeta(z, L)$ denote the Weierstrass zeta function of $L$.  The following identity is classical
$$
\cE(z, L) = \zeta(z, L) - z s_2(L) - \bar{z}A(L)^{-1},
$$
(see, for example, Prop. 1.5 of \cite{GS}, where the definitions of the constants $s_2(L)$ and $A(L)$ are also given). It follows immediately that
$$
\cE(mz, L) - m\cE(z, L) = \zeta(mz, L) - m\zeta(z, L).
$$
But now we have the addition formula
$$
\zeta(z_1+z_2, L) = \zeta(z_1, L) + \zeta(z_2, L) + \frac{1}{2} \frac{\frak p'(z_1, L) -\frak p'(z_2,L)}{\frak p(z_1, L) -\frak p(z_2,L)}.
$$
Taking the limit as $z_1$ tends to $z_2$, we obtain the statement of the lemma for $m=2$. For any $m \geq 2$, the above addition formula also shows that
$$
 \zeta((m+1)z, L) - (m+1)\zeta(z, L) =  \zeta(mz, L) - m\zeta(z, L) +  \frac{1}{2} \frac{\frak p'(mz, L) -\frak p'(z,L)}{\frak p(mz, L) -\frak p(z,L)},
 $$
whence the assertion of the lemma follows by induction on $m$.
\end{proof}

The next lemma is attributed in \cite{D} to unpublished notes of Swinnerton-Dyer.

\begin{lem} Let $w$ be any complex number such that $w$ is not in $L$, but $mw$ does belong to $L$ for some integer $m\geq 2$. Then $\cE(w, L) = -B_{m-1}(w, L)/m$.
\end{lem}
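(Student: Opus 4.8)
The plan is to deduce the formula from the preceding lemma, which states $B_r(z,L)=\cE(rz,L)-r\cE(z,L)$ for every integer $r\ge 2$; one applies it with $r=m-1$ and then specialises $z$ to $w$. The reason for using $B_{m-1}$ rather than $B_m$ is that $B_m$ is singular at $z=w$: since $mw\in L$ we have $(m-1)w\equiv -w\pmod L$, so the term $\bigl(\frak p'((m-1)z,L)-\frak p'(z,L)\bigr)/\bigl(\frak p((m-1)z,L)-\frak p(z,L)\bigr)$ appearing in $2B_m(z,L)$ has vanishing denominator at $z=w$. By contrast the meromorphic function $\cE((m-1)z,L)-(m-1)\cE(z,L)$, which the preceding lemma identifies with $B_{m-1}(z,L)$, is holomorphic at $z=w$: its only poles in $z$ lie in $L\cup\tfrac1{m-1}L$, and $w$ belongs to neither set, since $w\notin L$ and $(m-1)w\in L$ would force $w=mw-(m-1)w\in L$. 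Hence we may legitimately set $z=w$ in the identity $B_{m-1}(z,L)=\cE((m-1)z,L)-(m-1)\cE(z,L)$.

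It then remains to evaluate the right-hand side. Directly from the defining series $H_1(z,1,L)=\sum_{v\in L}(\bar z+\bar v)/|z+v|^{2}$ (literally in the region $\mathrm{Re}(s)>3/2$ and hence, by analytic continuation, at $s=1$) one sees that $\cE(\cdot,L)$ is invariant under translating its argument by an element of $L$ and is odd, i.e. $\cE(-z,L)=-\cE(z,L)$. Using $(m-1)w\equiv -w\pmod L$, these two facts give $\cE((m-1)w,L)=\cE(-w,L)=-\cE(w,L)$, whence
\[
B_{m-1}(w,L)=\cE((m-1)w,L)-(m-1)\cE(w,L)=-\cE(w,L)-(m-1)\cE(w,L)=-m\,\cE(w,L),
\]
which is exactly $\cE(w,L)=-B_{m-1}(w,L)/m$. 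In the boundary case $m=2$ the statement is to be read with the convention $B_1\equiv 0$ (consistent with extending the preceding lemma to $r=1$), and it holds because $w$ is then of order $2$ modulo $L$, so $-w\equiv w\pmod L$ and periodicity together with oddness of $\cE$ already force $\cE(w,L)=\cE(-w,L)=-\cE(w,L)=0$.

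The only step requiring any care is the one emphasised above: the preceding lemma is an equality of meromorphic functions, so before substituting $z=w$ one must know that $w$ is not one of its poles. Since this reduces to the trivial observation that $(m-1)w\notin L$, I do not anticipate any real difficulty; everything else is routine bookkeeping with the translation- and reflection-symmetries of the Kronecker--Eisenstein series $H_1$.
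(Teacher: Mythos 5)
Your argument is correct and follows essentially the same route as the paper: apply the preceding lemma with $r=m-1$, then use periodicity and oddness of $\cE(\cdot,L)$ to obtain $\cE((m-1)w,L)=-\cE(w,L)$ and hence $B_{m-1}(w,L)=-m\,\cE(w,L)$. Your additional checks (that $w$ is not a pole of the identity before specialising, and the convention for the boundary case $m=2$) are sensible refinements of points the paper leaves implicit.
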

\begin{proof} By the previous lemma, we have
$$
B_{m-1}(w, L) = \cE((m-1)w, L) - (m-1)\cE(w, L).
$$
But, as a function of $z$,  $\cE(z,L)$ is periodic with respect to $L$ and odd, whence it follows that
$\cE((m-1)w, L) = -\cE(w, L)$. This completes the proof.
\end{proof}

Now we have the addition formula
$$
\frak p(z_1+z_2, L)  + \frak p(z_1, L) + \frak p(z_2, L) =  \frac{1}{4} ((\frak p'(z_1, L) -\frak p'(z_2,L))/(\frak p(z_1, L) -\frak p(z_2,L)))^2,
$$
whence we immediately obtain the following corollary.

\begin{cor}\label{11}  Let $w$ be any complex number such that $w$ is not in $L$, but $w$ does have finite order in $\C/L$.   Let  $m$ be the exact order of $w$ in $\C/L$. Assuming $m \geq 3$, we have
$$
m\cE(w,L) = \sum _{k=1}^{k=m-2}\epsilon_k(\frak p((k+1)w, L) + \frak p(kw, L) + \frak p(w, L))^{1/2},
$$
where $\epsilon_k$ denotes the sign $+1$ or $-1$.
\end{cor}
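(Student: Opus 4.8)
The plan is to combine the lemma of Swinnerton-Dyer with the addition formula for $\frak p$ displayed just above the corollary. Since $w$ has exact order $m$ in $\C/L$, we have $mw \in L$ while $w \notin L$, so the hypotheses of that lemma are met with this very $m$; applying it gives $m\cE(w,L) = -B_{m-1}(w,L)$. Because $m \geq 3$, the index $m-1$ is at least $2$, so $B_{m-1}$ is defined. Unwinding its definition yields
$$
2m\cE(w,L) = -\frac{\frak p''(w,L)}{\frak p'(w,L)} - \sum_{k=2}^{m-2}\frac{\frak p'(kw,L)-\frak p'(w,L)}{\frak p(kw,L)-\frak p(w,L)}.
$$
The remaining task is to identify the leading term and each summand as $\pm 2$ times a square root of $\frak p((k+1)w,L)+\frak p(kw,L)+\frak p(w,L)$, for $k=1$ and $k=2,\dots,m-2$ respectively.

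For the terms with $2 \leq k \leq m-2$ I would apply the addition formula with $z_1 = kw$ and $z_2 = w$ and take square roots, obtaining
$$
\frac{\frak p'(kw,L)-\frak p'(w,L)}{\frak p(kw,L)-\frak p(w,L)} = \pm 2\bigl(\frak p((k+1)w,L)+\frak p(kw,L)+\frak p(w,L)\bigr)^{1/2},
$$
which is exactly the desired $k$-th summand, the ambiguity in the square root being absorbed into the sign $\epsilon_k$. Here one must verify that the denominators do not vanish: $\frak p(kw,L)=\frak p(w,L)$ would force $kw \equiv \pm w \pmod L$, i.e. $m \mid k-1$ or $m \mid k+1$, which is impossible for $2 \le k \le m-2$ when $m \geq 3$.

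For the leading term $\frak p''(w,L)/\frak p'(w,L)$, which should play the role of the $k=1$ summand, I would pass to the limit $z_1 \to z_2 = w$ in the same addition formula. The difference quotient $(\frak p'(z_1)-\frak p'(w))/(\frak p(z_1)-\frak p(w))$ tends to $\frak p''(w,L)/\frak p'(w,L)$ by L'Hopital's rule, giving the duplication relation $\tfrac14(\frak p''(w,L)/\frak p'(w,L))^2 = \frak p(2w,L)+2\frak p(w,L)$; note $\frak p'(w,L)\neq 0$ since $w$ is not $2$-torsion (as $m \geq 3$), so the quotient is meaningful. Taking square roots exhibits $\frak p''(w,L)/\frak p'(w,L)$ as $\pm 2$ times the square root of $\frak p(2w,L)+\frak p(w,L)+\frak p(w,L)$, precisely the $k=1$ instance of the general summand.

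Substituting all of these back, dividing by $2$, and absorbing the overall minus sign together with each $\pm$ into a single sign $\epsilon_k$ yields $m\cE(w,L) = \sum_{k=1}^{m-2}\epsilon_k(\frak p((k+1)w,L)+\frak p(kw,L)+\frak p(w,L))^{1/2}$, as claimed. The only genuine subtlety is the degenerate $k=1$ term, where the addition formula must be interpreted as a limit and the non-vanishing of $\frak p'(w,L)$ invoked; every other step is a direct substitution from the earlier lemmas.
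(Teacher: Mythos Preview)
Your proof is correct and follows exactly the route the paper takes: the paper simply says the corollary is immediate from the Swinnerton-Dyer lemma and the addition formula for $\frak p$, and you have spelled out the substitution (with $z_1=kw$, $z_2=w$) and the limiting case $k=1$ that this entails. Your additional checks that the denominators $\frak p(kw,L)-\frak p(w,L)$ and $\frak p'(w,L)$ do not vanish are details the paper leaves implicit but which are correctly handled.
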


\medskip

We can now give the proof of Theorem \ref{int}. Recall that the period lattice of the Neron differential of our fixed global minimal Weierstrass equation \eqref{1} is $\frak L= \Omega_{\infty}\cO$. Take $w= \psi (\frak b)\Omega_{\infty}/g_n$, where $\frak b$ is any fixed integral ideal of $K$ prime to $\frak g_n$. Thus $\cE(w, \frak L)$ is any one of the conjugates of $\cE(\Omega _\infty/g_n, \frak L)$ over $K$.  Let $m$ be the smallest positive rational integer lying in the ideal $\frak g_n$, so that $m$ is also the smallest positive rational integer with the property that $mw$ lies in $\frak L$. Moreover, since $E$ has good reduction at the primes of $K$ above 2, the ideal $\frak f$ is not divisible by any prime of $K$ above 2. This means that the smallest positive rational integer in the ideal $\frak g_n$ must be odd.  It follows that $m$ is odd, and it must then be $>2$. Let $P$ be the point on $E$ defined by $w$. Then we have
\begin{equation}\label{12}
{\frak p}(rw, \frak L) = x(rP) + (a_1^2 + 4a_2)/12, \, \, (r=1,..., m-1).
\end{equation}
But, as $E$ has good reduction at all primes of $K$ above 2 and the point $rP$ has odd order,  it follows that $x(rP)$ is integral at each prime
of $\frak R_n$ above 2. Thus we can immediately conclude from Corollary \ref{11} and \eqref{12} that the following two assertions. Firstly, if $a_1/2$ lies in $\cO$, then
every conjugate of $\cE(\Omega_{\infty}/g_n, \frak L)$ over $K$ is integral at all places of $\frak R_n$ above 2. In general, if we drop the assumption that $a_1/2$ lies in $\cO$, all we can say is that every conjugate of $2\cE(\Omega_{\infty}/g_n, \frak L)$ over $K$ is integral at every place of $\frak R_n$ above $2$. Taken together, these two assertions clearly imply Theorem \ref{int}. \qed.

\section{The induction argument}

Let $E$ be an elliptic curve defined over $K$, with complex multiplication by the ring of integers of $K$, and global minimal Weierstrass equation given by \eqref{1}. We fix once and for all any place of the algebraic closure of $\Q$ above 2, and write $ord_2$ for the order valuation at this place, normalized so that $ord_2(2) = 1$. Define $\alpha_E$ to be 0 or 1, according as 2 does or does not divide $a_1$ in $\cO$, where we recall that $a_1$ is one of the coefficients in the equation \eqref{1}.  For any admissible sequence $\{\pi_1,..., \pi_n,...\}$ for $E/K$,  we define $\frak M_n = \pi_1\ldots\pi_n$, and
\begin{equation}\label{13}
L^{(alg)}(\bar{\psi}_{\frak M_n}, 1) = L(\bar{\psi}_{\frak M_n}, 1)\sqrt{\frak M_n}/\Omega_\infty,
\end{equation}
which is an element of $K$. Moreover, we define
\begin{equation}\label{13*}
  \phi_E = \alpha_E \, \, \rm{or} \, \, \, max\{\alpha_E, \, -ord_2(\it{L^{(alg)}}(\bar{\psi}_E, 1))\},
\end{equation}
according as $L(\bar{\psi}_E, 1) = 0$, or $L(\bar{\psi}_E, 1) \neq 0$.  Our goal in this section is to prove the following theorem.

\begin{thm}\label{21} Assume that $K \neq \Q(\sqrt{-1}), \Q(\sqrt{-3})$, and that $E$ has good reduction at all places of $K$ above 2. Then, for all admissible sequences $\{\pi_1,..., \pi_n,...\}$ for $E/K$, and all integers $n \geq 1$, we have
\begin{equation}\label{14}
ord_2(L^{(alg)}(\bar{\psi}_{\frak M_n}, 1)) \geq n - \phi_E.
\end{equation}
\end{thm}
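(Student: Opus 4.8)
The plan is to argue by induction on $n$: assuming \eqref{14} for every admissible sequence at all smaller levels, I would use the averaging identity \eqref{5} of Theorem~\ref{av} to isolate the term $M=\frak M_n$ and estimate $2$-adically all the other terms. First I would record the elementary link between the imprimitive $L$-value occurring in \eqref{5} and the algebraic $L$-value $L^{(alg)}(\bar\psi_M,1):=L(\bar\psi_M,1)\sqrt{M}/\Omega_\infty$, formed for each $M\in\mathcal D_n$ exactly as in \eqref{13}. Since the primes of $S_n$ not dividing the conductor $\frak f_M=M\frak f$ are precisely the ideals $\pi_j\cO$ with $\pi_j\nmid M$ and $1\le j\le n$, we have $L_{S_n}(\bar\psi_M,1)=L(\bar\psi_M,1)\prod_{\pi_j\nmid M}(1-\bar\psi_M(\pi_j\cO)/N(\pi_j\cO))$, hence $L_{S_n}(\bar\psi_M,1)/\Omega_\infty=(1/\sqrt{M})\,L^{(alg)}(\bar\psi_M,1)\prod_{\pi_j\nmid M}(1-\bar\psi_M(\pi_j\cO)/N(\pi_j\cO))$.

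The heart of the argument is a $2$-adic lower bound for each of those Euler factors: for a prime $\pi$ of the admissible sequence with $\pi\nmid M$, I claim $ord_2(1-\bar\psi_M(\pi\cO)/N(\pi\cO))\ge 1$. Writing $N(\pi\cO)=\psi_M(\pi\cO)\overline{\psi_M(\pi\cO)}$ one has $1-\bar\psi_M(\pi\cO)/N(\pi\cO)=(\psi_M(\pi\cO)-1)/\psi_M(\pi\cO)$, and since $\psi_M(\pi\cO)$ generates the ideal $\pi\cO$, which is prime to $2$ as $\pi\equiv 1\bmod 4$, the denominator has $ord_2$ equal to $0$, so it suffices to prove $\psi_M(\pi\cO)\equiv\pm 1\bmod 4$. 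This is the one place where the hypothesis $K\ne\Q(\sqrt{-1}),\Q(\sqrt{-3})$ is decisive: because $\psi_E((\alpha))$ is a generator in $K^\times$ of the ideal $(\alpha)$, the quotient $\psi_E((\alpha))/\alpha$ lies in $\cO^\times=\mu_K=\{\pm 1\}$, so $\psi_E((\alpha))=\pm\alpha$ for every $\alpha$ prime to $\frak f$; combined with $\psi_M=\psi_E\chi_M$ and $\chi_M(\pi\cO)=\pm 1$ this gives $\psi_M(\pi\cO)=\pm\pi\equiv\pm 1\bmod 4$, the valuation of $\psi_M(\pi\cO)-1$ then being $\ge 2$ if $\psi_M(\pi\cO)\equiv 1$ and exactly $1$ if $\psi_M(\pi\cO)\equiv -1$.

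Granting this, the induction runs as follows. If $M\in\mathcal D_n$ is a product of $k$ of the $\pi_j$, the product appearing above has $n-k$ factors each with $ord_2\ge 1$, while $ord_2(1/\sqrt{M})=0$ because $M$ is prime to $2$; hence $ord_2(L_{S_n}(\bar\psi_M,1)/\Omega_\infty)\ge ord_2(L^{(alg)}(\bar\psi_M,1))+(n-k)$. For $k\ge 1$ the $k$ primes dividing $M$ can be taken, after reordering and completing to an infinite sequence, as the first $k$ terms of an admissible sequence, so the induction hypothesis at level $k$ — with the same $\phi_E$, which depends only on $E$ — gives $ord_2(L^{(alg)}(\bar\psi_M,1))\ge k-\phi_E$; for $k=0$, where $M=1$ and $\psi_M=\psi_E$, the inequality $ord_2(L^{(alg)}(\bar\psi_E,1))\ge-\phi_E$ is exactly the definition of $\phi_E$, whether or not $L(\bar\psi_E,1)$ vanishes. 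Thus $ord_2(L_{S_n}(\bar\psi_M,1)/\Omega_\infty)\ge n-\phi_E$ for every $M\in\mathcal D_n$ with $M\ne\frak M_n$. Moreover Theorem~\ref{int} gives that $2^{\alpha_E}\Psi_n$ is integral at the places above $2$, i.e. $ord_2(\Psi_n)\ge-\alpha_E$, so $ord_2(2^n\Psi_n)\ge n-\alpha_E\ge n-\phi_E$ since $\phi_E\ge\alpha_E$. Rewriting \eqref{5} as $L_{S_n}(\bar\psi_{\frak M_n},1)/\Omega_\infty=2^n\Psi_n-\sum_{M\ne\frak M_n}L_{S_n}(\bar\psi_M,1)/\Omega_\infty$ now yields $ord_2(L_{S_n}(\bar\psi_{\frak M_n},1)/\Omega_\infty)\ge n-\phi_E$. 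Finally $S_n$ is exactly the set of primes dividing the conductor $\frak f_{\frak M_n}=\frak M_n\frak f$, so $L_{S_n}(\bar\psi_{\frak M_n},1)=L(\bar\psi_{\frak M_n},1)$, and as $\frak M_n$ is prime to $2$ this last valuation equals $ord_2(L^{(alg)}(\bar\psi_{\frak M_n},1))$, which is the bound \eqref{14}.

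The step I expect to be the real obstacle is the Euler-factor estimate: one needs the precise normalization $\psi_E((\alpha))=\pm\alpha$ for $\alpha$ prime to $\frak f$ — exactly here the exclusion of $\Q(\sqrt{-1})$ and $\Q(\sqrt{-3})$, i.e. $\mu_K=\{\pm 1\}$, is indispensable — together with the check that the congruence $\pi\equiv 1\bmod 4$ genuinely raises the $2$-adic valuation from $\ge 0$ to $\ge 1$. Everything else is bookkeeping built on Theorems~\ref{av} and \ref{int}; the one subtlety to keep in mind is that any subset of an admissible sequence, suitably reordered, is again admissible, so that the induction hypothesis really does apply to every proper divisor of $\frak M_n$.
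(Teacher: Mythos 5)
Your proposal is correct and follows essentially the same route as the paper's own proof: induction on $n$, the Euler-factor estimate $ord_2(1-\bar\psi_M(\pi\cO)/N(\pi\cO))\ge 1$ coming from $\psi_M(\pi\cO)=\pm\pi$ with $\pi\equiv 1 \bmod 4$ (this is the paper's inequality \eqref{15}, where $K\neq\Q(\sqrt{-1}),\Q(\sqrt{-3})$ enters), the bound $ord_2\bigl(\sum_{M\in\mathcal D_n}L_{S_n}(\bar\psi_M,1)/\Omega_\infty\bigr)\ge n-\alpha_E$ from Theorems \ref{av} and \ref{int}, and the induction hypothesis applied to the proper divisors of $\frak M_n$. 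The only difference is that you spell out details the paper leaves implicit (the passage between imprimitive and primitive $L$-values, the identification $L_{S_n}(\bar\psi_{\frak M_n},1)=L(\bar\psi_{\frak M_n},1)$, and the reordering/completion of subsets of an admissible sequence), all of which are handled correctly.
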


\begin{proof} We shall prove the theorem by induction on $n$, and we begin with an obvious remark.  Let $r$ be any integer $\geq 0$, and recall that $\psi_{\frak M_r}$ denotes the Grossencharacter of the twisted curve $E^{(\frak M_r)}$. For each $n > r$, write $\frak p_n = \pi_n\cO$. Then $\frak p_n$ is prime to the conductor of $\psi_{\frak M_r}$, and we have
\begin{equation}\label{15}
ord_2(1- \bar{\psi}_{\frak M_r}( \frak p_n)/N \frak p_n)  \geq 1.
\end{equation}
Indeed, we have $\psi_{\frak M_r}( \frak p_n) = \zeta \pi_n$, where $\zeta = 1$ or $-1$
because $K \neq \Q(\sqrt{-1}), \Q(\sqrt{-3})$. Thus $\zeta \equiv 1\, mod \, \, 2$, and \eqref{15}
then follows easily because $\pi_n \equiv 1 \, mod \, 4$ and $ N \frak p_n = \psi_{\frak M_r}( \frak p_n)  \bar{\psi}_{\frak M_r}( \frak p_n).$ Note also that, on combining Theorems \ref{int} and \ref{av},
we conclude that, for all integers $n \geq 1$, we have
\begin{equation}\label{16}
ord_2(\sum_{M \in {\mathcal D}_{n}} L_{S_n}(\bar{\psi}_M, 1)/\Omega_{\infty}) \geq n - \alpha_E.
\end{equation}
It is clear that, on combining \eqref{15} for $r=0$ and \eqref{16} for $n=1$, we immediately obtain
\eqref{14} for $n=1$. Suppose now that $n > 1$, and that \eqref{14} has been proven for all integers
strictly less than $n$. Combining this inductive hypothesis with assertion \eqref{15}, we conclude that for all proper divisors $M$ of $\frak M_n$, we have
$$
 ord_2(L_{S_n}(\bar{\psi}_M, 1)/\Omega_\infty) \geq n - \phi_E,
 $$
 whence \eqref{16} again shows that \eqref{14} holds for the integer $n$. This completes the proof of the theorem.
\end{proof}

We next investigate which rational primes $p$ split in $K$, and have the additional property that they can be written as $p=\pi\pi^*$, with $\pi$ in $\cO$ satisfying $\pi \equiv 1 \, mod  \, 4$ (and thus automatically also satisfying $\pi^* \equiv 1 \, mod  \, 4$). We call primes $p$ with this property {\it special} split primes for $K$. Obviously, a necessary condition for $p$ to be a special split prime for $K$ is that $p \equiv 1 \, mod \, 4$. We remark that it is clear from the Chebotarev density theorem that there are always infinitely many special split primes for $K$.

\begin{lem}\label{split} Assume that  $K \neq \Q(\sqrt{-1}), \Q(\sqrt{-2}), \Q(\sqrt{-3})$. Let $p$ be any rational prime which splits in $K$, and which satisfies $p \equiv 1\, mod \, 4$. If $K = \Q(\sqrt{-7})$, then $p$ is always a special split prime for $K$. If $K = \Q(\sqrt{-q})$, where $q = 11, 19, 43, 67, 163$, then such a $p$ is a special split prime for $K$ if and only if we can write $p = \pi\pi^*$ in $\cO$ with $\pi + \pi^* \equiv 0 \, mod \, 2$.
\end{lem}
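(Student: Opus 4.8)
The plan is to work in the imaginary quadratic field $K = \Q(\sqrt{-q})$ directly with explicit representatives for the ideals above $p$, and to reduce everything to a congruence condition mod $4$ on a suitably normalized generator. Since $p$ splits in $K$ and $K$ has class number $1$, write $p\cO = \frak p \frak p^*$ with $\frak p = \pi_0\cO$ for some $\pi_0 \in \cO$, unique up to multiplication by a unit of $K$. When $q \neq 1, 2, 3$ the unit group is $\{\pm 1\}$, so the only ambiguity is the sign, and $p$ is a special split prime precisely when one of $\pm\pi_0$ is $\equiv 1 \bmod 4$ in $\cO$ (note that if $\pi \equiv 1 \bmod 4$ then, taking complex conjugates and using $p \equiv 1 \bmod 4$, automatically $\pi^* \equiv 1 \bmod 4$, as remarked in the text). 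So the whole question is: given that $N\pi_0 = p \equiv 1 \bmod 4$, can we choose the sign of $\pi_0$ so that $\pi_0 \equiv 1 \bmod 4$?

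First I would fix the standard integral basis: since $-q \equiv 1 \bmod 4$ for all $q$ in the list, $\cO = \Z[\omega]$ with $\omega = (1+\sqrt{-q})/2$, and $\omega^2 = \omega - (1+q)/4$. Write $\pi_0 = a + b\omega$ with $a, b \in \Z$; then $N\pi_0 = a^2 + ab + b\tfrac{1+q}{4}b = a^2+ab + \tfrac{1+q}{4}b^2 = p$. I would next analyze the reduction of $\pi_0$ and $-\pi_0$ modulo the ideal $4\cO = 4\Z[\omega]$, i.e. modulo $4$ in both coordinates $a, b$. The key elementary point is to look at $\pi_0 \bmod 2$: since $p$ is odd, $\pi_0 \not\equiv 0 \bmod 2$, so $(a,b) \bmod 2$ is one of $(1,0), (0,1), (1,1)$. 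The case $K = \Q(\sqrt{-7})$ is special because $(1+q)/4 = 2$ is even, which forces, reading $N\pi_0 = a^2+ab+2b^2 \equiv 1 \bmod 4$, the residue $b$ to be even (if $b$ were odd then $a$ would be odd and $a^2+ab+2b^2 \equiv 1+1+2 \equiv 0 \bmod 4$, or $a$ even giving $\equiv 2 \bmod 4$; either way not $1$). With $b$ even, $2\omega \cdot (b/2)$ kills the obstruction coming from $\omega$, and one checks $\pm\pi_0 \bmod 4$ then lies in $\{1+4\Z + 2\Z\,\omega,\dots\}$ — more carefully, one of $\pm\pi_0$ is $\equiv 1 \bmod 4\cO$ because $a$ is odd (forced by $N\pi_0$ odd together with $b$ even) and an odd rational integer is $\equiv \pm 1 \bmod 4$. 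For the remaining fields $q = 11, 19, 43, 67, 163$, one has $(1+q)/4$ odd, so the element $\omega$ itself is "genuinely odd" and $b$ need not be even; here the condition that one of $\pm\pi_0 \equiv 1 \bmod 4$ is genuinely restrictive, and I would show it is equivalent to $b \equiv 0 \bmod 2$, i.e. to $\pi_0 \in \Z + 2\omega\Z$, i.e. (since $\pi_0 + \pi_0^* = 2a + b = 2a + b$ and $\pi_0 + \pi_0^* \equiv b \bmod 2$) to $\pi_0 + \pi^* \equiv 0 \bmod 2$. One direction is immediate: if some $\pm\pi_0 \equiv 1 \bmod 4$ then its $\omega$-coordinate is $\equiv 0 \bmod 4$, in particular even, so $b$ is even. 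Conversely if $b$ is even, then $\pi_0 \equiv a \bmod 2\omega\Z$ reduces the problem to the rational integer $a$, which is odd since $N\pi_0 = a^2 + ab + \tfrac{1+q}{4}b^2 \equiv a^2 \bmod 2$ is odd; then exactly one of $\pm a \equiv 1 \bmod 4$, and correspondingly exactly one of $\pm\pi_0 \equiv 1 \bmod 4\cO$ — here one must also verify that adjusting by an even multiple of $\omega$ doesn't break the mod-$4$ congruence, which needs $b \equiv 0 \bmod 4$ rather than merely $b$ even; but replacing $\pi_0$ by $\pi_0 - b\omega + (\text{appropriate correction})$ is not allowed since we may only negate, so the honest statement is the one in the lemma, phrased as $\pi + \pi^* \equiv 0 \bmod 2$, and the cleanest route is to observe $\pi + \pi^* = \mathrm{Tr}_{K/\Q}(\pi) \in \Z$ and that $\pi \equiv 1 \bmod 4$ implies $\mathrm{Tr}(\pi) \equiv \mathrm{Tr}(1) = 2 \not\equiv 0$...

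so in fact I would re-examine: the correct equivalence is $\pi + \pi^* \equiv 2 \bmod 4$ is impossible to demand, and the lemma's condition $\pi+\pi^* \equiv 0 \bmod 2$ must be read with the understanding that it is automatically satisfiable after the sign choice exactly when $\frak p$ admits a generator in $\Z + 2\omega\Z$. The main obstacle, then, is bookkeeping: carefully tracking what the two possible sign choices for the generator do modulo $4\cO$ in the basis $\{1,\omega\}$, and separating the arithmetic of the coefficient $(1+q)/4$ being even ($q=7$) versus odd ($q=11,19,43,67,163$). The hard part is not any deep input — there is none beyond class number $1$ and the unit group — but getting the mod-$4$ case analysis exactly right, especially confirming that for $\Q(\sqrt{-7})$ the norm condition $p \equiv 1 \bmod 4$ alone forces the favorable residue pattern, whereas for the other five fields it does not and the extra hypothesis $\pi + \pi^* \equiv 0 \bmod 2$ is exactly the missing constraint. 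Finally I would record the contrapositive for the "only if" direction and assemble the two cases into the statement as given.
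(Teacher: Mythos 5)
Your setup is the same as the paper's (integral basis $1,\tau$ with $\tau=(1+\sqrt{-q})/2$, the norm form $p=a^{2}+ab+mb^{2}$ with $m=(q+1)/4$, and the remark that the only unit ambiguity is a sign), but there is a genuine gap at the decisive step in both cases: $\pi=a+b\tau\equiv 1\bmod 4\cO$ means $a\equiv 1\bmod 4$ \emph{and} $b\equiv 0\bmod 4$, and you never establish the divisibility of $b$ by $4$. For $K=\Q(\sqrt{-7})$ you only prove that $b$ is even (which already follows from $p$ being odd) and then conclude that one of $\pm\pi$ is $\equiv 1\bmod 4\cO$; that inference is false as stated: $\pi=1+2\tau$ has $a$ odd and $b$ even, yet neither of $\pm\pi$ is $\equiv 1\bmod 4\cO$ (its norm is $11\equiv 3\bmod 4$, which is precisely why the hypothesis $p\equiv 1\bmod 4$ must be used and cannot be bypassed). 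For $q=11,19,43,67,163$ you notice the problem yourself (``which needs $b\equiv 0\bmod 4$ rather than merely $b$ even'') but the argument then dissolves into speculation about re-reading the statement; note also that your aside that $\pi\equiv 1\bmod 4$ forces $\mathrm{Tr}(\pi)\equiv 2\bmod 4$ is no obstruction, since $2\equiv 0\bmod 2$ is all the lemma asserts.

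The missing step is exactly where $p\equiv 1\bmod 4$ enters, and it is one line (it is how the paper argues): once $b=2b'$ is even, oddness of $p$ forces $a$ odd, and reducing the norm form modulo $4$ gives $p\equiv a^{2}+2ab'\equiv 1+2b'\bmod 4$; hence $p\equiv 1\bmod 4$ forces $b'$ even, i.e.\ $b\equiv 0\bmod 4$. Negating $\pi$ if necessary then makes $a\equiv 1\bmod 4$ (negation preserves $4\mid b$), so $\pi\equiv 1\bmod 4\cO$. With this inserted your two cases close up as intended: for $q=7$ the evenness of $m=2$ forces $b$ even automatically, so every split $p\equiv 1\bmod 4$ is special; for the other five fields $m$ is odd, $b$ need not be even, and $b$ even is equivalent to $\pi+\pi^{*}=2a+b\equiv 0\bmod 2$, while the reverse implication ($\pi\equiv 1\bmod 4\Rightarrow 4\mid b\Rightarrow$ even trace) you already have.
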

\begin{proof} Let  $K = \Q(\sqrt{-q})$,  and put $\tau = (1+\sqrt{-q})/2$, so that $1, \tau$ form an integral basis of $\cO$. Assume first that $K = \Q(\sqrt{-7})$. Then $p = a^2 + ab + 2b^2$, with $a$ an odd integer, whose sign can be chosen so that $a\equiv 1 \, mod \, 4$, and with $b$ an even integer, which has necessarily to be divisible by 4 since $p \equiv 1 \, mod \, 4$. We then clearly have that
$ \pi = a + b\tau$ satisfies  $\pi \equiv 1\,  mod \, 4$. Finally, assume that $K = \Q(\sqrt{-q})$, where $q$ is any of $11, 19, 43, 67, 163$. Then $p = a^2 + ab + mb^2$, where $a$ and $b$ are integers, and $m  = (q+1)/4$ is now an odd integer. Since $p \equiv 1\, mod \, 4$, we see that $\pi = a + b\tau$ satisfies $\pi \equiv 1 \, mod \, 4$ if and only if $a \equiv 1\, mod \, 4 \,$ and $b$ is even. But $\pi + \pi^* = 2a + b$, and so $\pi + \pi^*$ will be even if and only if $b$ is even. By if $b$ is even, then $a$ is odd, and then we can always choose the sign of $a$ so that  $a \equiv 1\, mod \, 4$. This completes the proof.
\end{proof}

\medskip

Now assume that our elliptic curve $E$ is in fact defined over $\Q$, and take \eqref{1} to be a global minimal Weierstrass equation for $E$ over $\Q$. Then the conductor $N(E)$ of $E$ is given by 
$$
N(E) =  d_KN\frak f, 
$$
where $d_K$ denotes the absolute value of the discriminant of $K$. Moreover, the complex $L$-series $L(E, s)$ of $E$ over $\Q$  coincides with the Hecke L-seres  $L(\bar{\psi}_E, s)$. If $R$ is a non-zero square free integer, $E^{(R)}$ will now denote the twist of $E$ by the extension $\Q(\sqrt{R})/\Q$. Write
\begin{equation}\label{17}
L^{(alg)}(E^{(R)}, 1) = L(E^{(R)}, 1)\sqrt{R}/\Omega_\infty.
\end{equation}
Finally, $\alpha_E$ has the same definition as earlier, and $\phi_E$ is again defined by \eqref{13*}.

\medskip

\begin{lem}\label{sq1} Assume that $E$ is defined over $\Q$, and has complex multiplication by the ring of integers of any of the fields $K = \Q(\sqrt{-q})$, where $q = 7, 11, 19, 43, 67, 163$. Suppose further that $E$ has good reduction at 2. Then the conductor $N(E)$ of $E$ is a square.
\end{lem}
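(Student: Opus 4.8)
The plan is to compute $N(E) = d_K N\mathfrak{f}$ and show each factor contributes a square (up to adjusting by the other factor). The key point is that since $E$ has good reduction at $2$, the conductor $\mathfrak{f}$ of $\psi_E$ is prime to $2$, and since $E/\Q$ has CM by $\mathcal{O}_K$ with $K = \Q(\sqrt{-q})$, the $\Q$-rationality of $E$ forces strong constraints on $\mathfrak{f}$. First I would recall that $K$ has class number $1$ and $d_K = q$ for all $q = 7, 11, 19, 43, 67, 163$ (since $q \equiv 3 \bmod 4$), so $d_K$ is odd but not a square. Hence the task reduces to showing $N\mathfrak{f}$ is odd and that $d_K \cdot N\mathfrak{f}$ is a perfect square, i.e., that $N\mathfrak{f} = q \cdot (\text{square})$.

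The main step is to analyze the primes of bad reduction. Since $E$ is defined over $\Q$ and has CM by $\mathcal{O}_K$, a rational prime $\ell$ of bad reduction for $E$ either ramifies in $K$ or splits in $K$ (it cannot be inert, since an elliptic curve over $\Q_\ell$ with CM by $\mathcal{O}_K$ and $\ell$ inert would have supersingular good reduction, contradiction — or one argues via the Grossencharacter directly). The ramified prime is $\ell = q$ itself, contributing the factor $d_K = q$ to $N(E)$. For a split prime $\ell = \lambda\bar\lambda$ of bad reduction, the curve $E/\Q_\ell$ has potentially good reduction (CM curves always have potentially good reduction), and because $\operatorname{Gal}(\bar\Q_\ell/\Q_\ell)$ acts on $\ell$-adic Tate modules through the Grossencharacter, the local conductor exponent $f_\ell(E)$ at $\ell$ equals $f(\psi_{E,\lambda}) + f(\psi_{E,\bar\lambda})$; by complex conjugation symmetry (as $E$ is defined over $\Q$, $\psi_E$ is fixed by the nontrivial automorphism of $K$ composed with conjugation), these two local conductor exponents at $\lambda$ and $\bar\lambda$ are equal, so $f_\ell(E)$ is even, contributing a square to $N(E)$. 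One must also check the prime $2$: by hypothesis $E$ has good reduction at $2$, so $f_2(E) = 0$, which is trivially a square and confirms $N\mathfrak{f}$ is odd.

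Putting this together: $N(E) = q^{f_q(E)} \prod_{\ell \text{ split, bad}} \ell^{f_\ell(E)}$, where each exponent $f_\ell(E)$ for split $\ell$ is even, and $f_q(E)$ is the exponent at the ramified prime. It remains to pin down $f_q(E)$. The contribution of the ramified prime decomposes as $d_K \cdot N(\mathfrak{f}_q)$ where $\mathfrak{f}_q$ is the $q$-part of the conductor of $\psi_E$; since $q \| d_K$ (as $q$ is tamely ramified, $d_K = q$ exactly), and since $\mathfrak{f}_q = \mathfrak{q}^{e}$ for the unique prime $\mathfrak{q}$ above $q$ with $N\mathfrak{q} = q$, the full power of $q$ dividing $N(E)$ is $q^{1 + e}$. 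One then argues that $e$ must be odd: this is where I expect the main obstacle. The cleanest route is that, since $E$ is defined over $\Q$, the pair $(E, \psi_E)$ descends, and the nontrivial Galois automorphism $\sigma$ of $K/\Q$ must satisfy $\psi_E^\sigma = \psi_E$ (as a Hecke character, using that $\psi_E\bar\psi_E = N$ is Galois-invariant), which forces $\sigma$ to fix $\mathfrak{q}$ (automatic, as $\mathfrak{q}$ is the unique prime above $q$) but also constrains $e$ through the relation between $f_q(E) = 1 + e$ being the local conductor exponent of a $2$-dimensional representation of $\operatorname{Gal}(\bar\Q_q/\Q_q)$; since $E$ has additive potentially good reduction at $q$ and $q > 3$ so the reduction is tame, $f_q(E) = 2$, forcing $e = 1$. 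Then $N(E) = q^2 \cdot \prod_{\ell}\ell^{f_\ell(E)}$ with all remaining exponents even, so $N(E)$ is a perfect square. The delicate part is the clean justification that $f_q(E) = 2$ (tame additive reduction at a prime $> 3$), which one can cite from Tate's algorithm or the theory of the conductor-discriminant for CM curves; I would lean on the standard fact that for $\ell \geq 5$, additive potentially good reduction gives conductor exponent exactly $2$.
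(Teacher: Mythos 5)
Your overall strategy (decompose $N(E) = d_K N_{K/\Q}\mathfrak{f}$, get evenness at split primes from the conjugation-invariance of $\mathfrak{f}$, and exponent exactly $2$ at the ramified prime $q$ from tame potentially good reduction) is workable and genuinely different from the paper's argument, but as written it contains one false step: the claim that a prime of bad reduction ``cannot be inert, since an elliptic curve over $\Q_\ell$ with CM by $\mathcal{O}_K$ and $\ell$ inert would have supersingular good reduction.'' That reasoning is backwards: the CM theory says that \emph{if} $E$ has good reduction at an inert prime, the reduction is supersingular; it does not say that good reduction holds. Inert primes of bad (additive) reduction certainly occur for curves satisfying the hypotheses of the lemma --- for instance the quadratic twist of $X_0(49)$ by any odd square-free $M\equiv 1 \bmod 4$ containing a prime inert in $\Q(\sqrt{-7})$ (several such twists appear in the paper's own Table I, e.g.\ $M=285=3\cdot 5\cdot 19$ with $5$ inert). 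So your case analysis of bad primes is incomplete. Fortunately the gap is easy to close inside your own framework: if $\ell$ is inert, then $N(\ell\mathcal{O}_K)=\ell^2$, so the exponent of $\ell$ in $N_{K/\Q}\mathfrak{f}$ is $2\,\mathrm{ord}_{\ell\mathcal{O}_K}(\mathfrak{f})$, which is automatically even; no reduction-type argument is needed at all for inert primes. With that one-line fix, and noting (as you do) that $d_K=q$ with $q\mathcal{O}_K=\mathfrak{q}^2$, $N\mathfrak{q}=q$, your argument is complete, since the exponent of $q$ in $N(E)$ equals the local conductor exponent $f_q(E)=2$ (tame, additive, potentially good reduction at $q\ge 7$), and the split-prime exponents $f_\lambda(\psi_E)+f_{\bar\lambda}(\psi_E)$ are even because $\mathfrak{f}^\sigma=\mathfrak{f}$ (as $\psi_E^\sigma=\bar\psi_E$ for $E$ defined over $\Q$, and conjugate characters have equal conductors). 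The hypothesis of good reduction at $2$ is then only needed to know $2\nmid N(E)$ (and is what excludes $K=\Q(\sqrt{-1}),\Q(\sqrt{-2})$), exactly as in the paper.

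For comparison, the paper argues entirely with the local conductor exponents of $E$ itself: every bad prime is odd, every bad $p>3$ contributes exactly $p^2$ by tameness of potentially good reduction, and the only delicate prime, $p=3$, is handled by quoting Gross's result that $E$ is a quadratic twist of a curve of conductor $q^2$, so the exponent at $3$ is $0$ or $2$. Your route replaces the appeal to Gross by the Grossencharacter conductor formula plus conjugation symmetry, which handles $p=3$ (split or inert) with no special case and no wild-ramification analysis; the cost is that you must invoke the formula $N(E)=d_K N_{K/\Q}\mathfrak{f}$ and the descent property $\psi_E^\sigma=\bar\psi_E$, both standard for CM curves over $\Q$. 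Also note that your detour through $e=\mathrm{ord}_{\mathfrak{q}}(\mathfrak{f})$ is unnecessary: once you know $f_q(E)=2$ you already know the power of $q$ in $N(E)$ is even, which is all the lemma requires.
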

\begin{proof} Let $p$ be any prime dividing $N(E)$. Since $E$ has potential good reduction at $p$,
we must have that $p^2$ exactly divides $N(E)$ whenever $p > 3$. Also $p \neq 2$, because $E$ has good reduction at 2. Thus we only have to check that an even power of 3 must divide $N(E)$. But, since $q > 3$,  it is well known (see \cite{GR}) that $E$ is the quadratic twist of an elliptic curve of conductor $q^2$, whence it follows immediately that either 3 does not divide $N(E)$, or $3^2$ exactly divides $N(E)$, according as 3 does not, or does,  divide the discriminant of the twisting quadratic extension. This completes the proof. 
\end{proof}

We now introduce a definition which for the moment is motivated by what is needed to deduce the next theorem from our earlier induction argument (but see also the connexion with Tamagawa factors discussed in the next section). Write $w_E$ for the sign in the functional equation of $L(E,s)$. We continue to assume that $E$ is defined over $\Q$, and satisfies the hypotheses of Lemma \ref{sq1}.
If $D$ is any square free integer which is prime to $N(E)$, it is well known that the root
number of the twist $E^{(D)}$ of $E$ by the quadratic extension $\Q(\sqrt{D})/\Q$ is given by $\chi_D(-N(E))w_E$, where $\chi_D$ denotes the Dirichlet character of this quadratic extension. Thus, in view of Lemma \ref{sq1}, we are led to make the following definition. 

\begin{defn} Assume that $E$ satisfies the hypotheses of Lemma \label{sq}. A square free  positive integer $M$ is said to be {\it admissible} for $E$ if it satisfies (i) $(M, N(E))=1$, (ii) $M \equiv 1 \, mod \, 4$ or $M \equiv 3 \, mod \, 4$, according as $w_E = +1$ or $w_E = -1$, and (iii) every prime factor of $M$ which splits in $K$ is a special split prime for $K$.
\end{defn}

\begin{thm}\label{18} Assume that $E$ is defined over $\Q$, has complex multiplication by the ring of integers of $K=\Q(\sqrt{-q})$, where $q = 7, 11, 19, 43, 67, 163$, and has good reduction at 2.  Let $M$ be a square free positive integer, which is admissible for $E$, and let $r(M)$ denote the number of primes of $K$ dividing $M$. Put $\epsilon$ equal to $+1$ or $-1$, according as $M \equiv 1 \, or  \,  3 \, 
mod \, 4$. Then, for $w_E = \epsilon$, we have
\begin{equation}\label{19}
ord_2(L^{(alg)}(E^{(\epsilon M)}, 1)) \geq r(M) - \phi_E.
\end{equation}
\end{thm}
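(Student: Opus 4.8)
The plan is to deduce Theorem \ref{18} from the already-proven Theorem \ref{21} by translating the statement for the curve $E/\Q$ into a statement for $E$ viewed over $K=\Q(\sqrt{-q})$. The first step is to note that $L(E^{(\epsilon M)},s) = L(\bar\psi_{E^{(\epsilon M)}},s)$, where $E$ is now regarded as having complex multiplication by $\cO$; and that $\epsilon M$, being prime to $N(E) = d_K N\frak f$ and congruent to $1 \bmod 4$ (since $w_E = \epsilon$ forces $\epsilon M \equiv 1 \bmod 4$), satisfies the hypotheses of the earlier twisting Lemma. So $\epsilon M = \pm(\text{product of rational primes})$, and I must factor $\epsilon M$ in $\cO$. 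Primes of $\Q$ dividing $M$ are of three types: inert in $K$, ramified in $K$ (impossible, since $(M,N(E))=1$ and $d_K \mid N(E)$), or split in $K$. For an inert prime $p$, $p$ itself is a prime element of $\cO$ with $p \equiv 1 \bmod 4$ (as $p \equiv 1 \bmod 4$ for such $p$ when $w_E = +1$, and one must check the $w_E=-1$ case carefully — here the admissibility congruence condition (ii) and the fact that $-1$ is a non-residue when needed come into play). For a split prime $p = \pi\pi^*$, admissibility condition (iii) guarantees $p$ is a \emph{special} split prime, so I may choose $\pi, \pi^* \equiv 1 \bmod 4$. The upshot: I can write $\epsilon M \cO$ (up to sign, absorbed as a root of unity, legitimate since $K\neq\Q(\sqrt{-1}),\Q(\sqrt{-3})$) as a product $\pi_1\cdots\pi_n$ of distinct prime elements of $\cO$ each $\equiv 1 \bmod 4$ and prime to $\frak f$, where $n = r(M)$ counts primes of $K$ above $M$ (each inert $p$ contributes one such $\pi$, each split $p$ contributes two).

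The second step is to extend this to a full admissible sequence $\pi_1, \pi_2, \dots$ for $E/K$ — just adjoin infinitely many further prime elements satisfying \eqref{8}, which exist by Chebotarev (indeed there are infinitely many special split primes, as remarked in the text) — so that Theorem \ref{21} applies with $\frak M_n = \pi_1\cdots\pi_n = \pm\,\epsilon M$ (up to a unit). Then \eqref{14} reads $ord_2(L^{(alg)}(\bar\psi_{\frak M_n},1)) \ge n - \phi_E = r(M) - \phi_E$.

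The third step is to reconcile the two normalizations of the algebraic $L$-value: \eqref{13} defines $L^{(alg)}(\bar\psi_{\frak M_n},1) = L(\bar\psi_{\frak M_n},1)\sqrt{\frak M_n}/\Omega_\infty$ while \eqref{17} defines $L^{(alg)}(E^{(\epsilon M)},1) = L(E^{(\epsilon M)},1)\sqrt{\epsilon M}/\Omega_\infty$. Since $E^{(\epsilon M)}$ over $\Q$ becomes, over $K$, the twist of $E$ by $\psi_E\chi_{\frak M_n}$ with $\frak M_n = \epsilon M$ up to a unit square, these two quantities agree up to a 2-adic unit (the ratio $\sqrt{\frak M_n}/\sqrt{\epsilon M}$ is a unit, and the unit ambiguity from the sign is a root of unity in $\cO^\times$, hence a 2-adic unit because $K$ has only $\pm 1$). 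Hence their $ord_2$ values coincide, and \eqref{19} follows directly from \eqref{14}. One should also double check that $\phi_E$, defined via \eqref{13*} using $L^{(alg)}(\bar\psi_E,1)$, matches the $\phi_E$ appearing in Theorem \ref{18}, which it does by the same unit-comparison since here $\Omega_\infty$ is the period of the minimal equation over $\Q$.

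The main obstacle I expect is the bookkeeping in the first step: verifying carefully that every prime factor of $M$ genuinely gives rise to prime elements $\pi \equiv 1 \bmod 4$ in $\cO$, handling the split case via the definition of special split prime (Lemma \ref{split}) and the inert case via the congruence condition in the definition of admissible, and correctly tracking that the number of resulting $\pi$'s is exactly $r(M)$ — together with the sign/unit ambiguities in passing from the ideal $\epsilon M\cO$ to a product of specific generators. None of this is deep, but it is where all the hypotheses (good reduction at 2, $q \neq 1,2,3$, admissibility of $M$, the relation $w_E = \epsilon$) get used, and it must be done with care so that Theorem \ref{21} applies verbatim.
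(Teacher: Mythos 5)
Your overall strategy is exactly the paper's: factor $\epsilon M$ into prime elements of $\cO$ all congruent to $1 \bmod 4$ (using Lemma \ref{split} for the split prime factors), note that their number is $r(M)$, and then quote Theorem \ref{21} together with the identification of $L(E^{(\epsilon M)},s)$ with the Hecke $L$-series of the twisted Grossencharacter and of the normalizations \eqref{13} and \eqref{17}. However, the bookkeeping you defer as ``the main obstacle'' is sketched incorrectly in two places. First, it is not true that when $w_E=+1$ every inert prime factor of $M$ is $\equiv 1 \bmod 4$: admissibility constrains only the split prime factors (condition (iii)) and the residue of $M$ itself (condition (ii)), so $M$ may well contain an even number of inert primes $\equiv 3 \bmod 4$ (e.g.\ $M=285=3\cdot 5\cdot 19$ in Table I for the conductor $49$ curve). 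The paper's device is to set $\pi=p$ or $\pi=-p$ for an inert $p$ according as $p\equiv 1$ or $3 \bmod 4$; together with the special split choice $\pi,\pi^*\equiv 1\bmod 4$, every factor is then $\equiv 1 \bmod 4$, as Theorem \ref{21} requires of each term of an admissible sequence.

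Second, your step 3 disposes of the residual global sign, $\frak M_n=\pm\epsilon M$, by saying it is a root of unity and hence a $2$-adic unit, so that the two algebraic $L$-values agree up to a unit. That reasoning is unsound: the twists of $E/K$ by $K(\sqrt{\epsilon M})$ and by $K(\sqrt{-\epsilon M})$ are genuinely different characters (since $-1$ is not a square in $K$), so their $L$-values need not differ by a unit at all, and if $\frak M_n$ were $-\epsilon M$ you would be applying Theorem \ref{21} to the wrong twist. What saves the argument is that the ambiguity never arises: once every $\pi_i\equiv 1\bmod 4$, the product $\pi_1\cdots\pi_{r(M)}$ is $\equiv 1 \bmod 4$, and of $\pm M$ only $\epsilon M$ has that residue; hence $\frak M_n=\epsilon M$ exactly, \eqref{13} and \eqref{17} coincide literally, and \eqref{14} gives \eqref{19}. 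With these two repairs (which is precisely the content of the paper's short proof), your argument is complete and identical to the paper's.
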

\begin{proof} Let $M$ be any square free integer which is admissible for $E$, and let $p$ be any prime dividing $M$. If $p$ is inert in $K$, define $\pi$ to be $p$ or $-p$, according as $p$ is congruent to 1 or 3 $mod \, 4$. If $p$ splits in $K$, then Lemma \ref{split} shows that we can then write $p = \pi\pi^*$, where $\pi$ and $\pi^*$ are elements of $\cO$, which are both congruent to 1 $mod \, 4$. Since every $p$ with $p \equiv 3 \, mod \, 4$, and $p$ dividing $M$, is inert in $K$, it is now clear that  we can write
$$
\epsilon M = \pi_1\ldots \pi_{r(M)}, \, \ ,
$$
where the $\pi_i$ are distinct prime elements of $\cO$, which are all congruent to 1 $mod \, 4$,
and which are also prime to $\frak f$ and the discriminant of $K$. Hence the above theorem is an immediate consequence of Theorem \ref{21}.
\end{proof}

The following is an immediate corollary of the above theorem. Of course, the hypothesis made in the corollary that $L(E, 1) \neq 0$ implies that the root number $w_E = 1$, and so the admissible $M$ in this case are $\equiv 1 \, mod \, 4.$

\begin{cor}\label{ap} Assume that $E$ is defined over $\Q$, has complex multiplication by the ring of integers of $K$, and has good reduction at 2.  Suppose further that  we have (i) $K \neq \Q(\sqrt{-3})$, (ii) $L(E, 1) \neq 0$, and (iii) $ord_2(L^{(alg)}(E, 1)) < 0$. Let $M$ be any square free positive integer which is admissible for $E$, and which is divisible only by rational primes which split in $K$. Then
$$
ord_2(\frac{L^{(alg)}(E^{(M)}, 1)}{L^{(alg)}(E, 1)}) \geq 2k(M),
$$
where $k(M)$ denotes the number of rational prime divisors of $M$.
\end{cor}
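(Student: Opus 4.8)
The plan is to read the corollary off Theorem~\ref{18}, once three elementary points have been checked: that $K$ is necessarily one of the six fields appearing in Theorem~\ref{18}, that $r(M)=2k(M)$, and that $\phi_E=-ord_2(L^{(alg)}(E,1))$ under hypothesis~(iii).

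First I would pin down $K$. Since $E/\Q$ has good reduction at $2$, we have $2\nmid N(E)$, and as $N(E)=d_KN\frak f$ this forces $2\nmid d_K$. This excludes $K=\Q(\sqrt{-1})$ and $K=\Q(\sqrt{-2})$ (whose discriminants are $-4$ and $-8$), while $K=\Q(\sqrt{-3})$ is excluded by hypothesis~(i); hence $K=\Q(\sqrt{-q})$ for some $q\in\{7,11,19,43,67,163\}$. In particular $E$ satisfies the hypotheses of Lemma~\ref{sq1} and of Theorem~\ref{18}, so that $N(E)$ is a square and the notion of $M$ being admissible for $E$ is meaningful. Moreover, since $L(E,1)\neq 0$ the functional equation forces $w_E=+1$, so condition~(ii) in the definition of admissibility gives $M\equiv 1\bmod 4$; thus, in the notation of Theorem~\ref{18}, $\epsilon=+1$ and $E^{(\epsilon M)}=E^{(M)}$.

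Next I would apply Theorem~\ref{18} to obtain
\[ ord_2\big(L^{(alg)}(E^{(M)},1)\big)\ \geq\ r(M)-\phi_E, \]
and then simplify both quantities on the right. Since every rational prime dividing $M$ splits in $K$, each of the $k(M)$ rational prime divisors of $M$ gives rise to exactly two distinct primes of $K$, so $r(M)=2k(M)$. For $\phi_E$: because $L(E,1)=L(\bar\psi_E,1)\neq 0$, the definition \eqref{13*} reads $\phi_E=\max\{\alpha_E,\,-ord_2(L^{(alg)}(\bar\psi_E,1))\}$, and taking $n=0$ in \eqref{13} (so that $\frak M_0=1$), together with the equality $L(E,s)=L(\bar\psi_E,s)$, shows $L^{(alg)}(\bar\psi_E,1)=L^{(alg)}(E,1)$. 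Since $2$ is unramified in $K$, $ord_2$ is integer-valued at our chosen place above $2$, so hypothesis~(iii) gives $-ord_2(L^{(alg)}(E,1))\geq 1\geq\alpha_E$, and therefore $\phi_E=-ord_2(L^{(alg)}(E,1))$.

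Substituting, we obtain $ord_2(L^{(alg)}(E^{(M)},1))\geq 2k(M)+ord_2(L^{(alg)}(E,1))$, which is exactly the claimed inequality once $ord_2(L^{(alg)}(E,1))$ is moved to the left-hand side. There is no deep obstacle here: the whole content sits in Theorems~\ref{21} and~\ref{18}, and the only steps needing any care are the determination of $K$ and the identity $\phi_E=-ord_2(L^{(alg)}(E,1))$ — the latter being precisely where hypothesis~(iii) enters, to guarantee that the maximum in \eqref{13*} is attained by the $L$-value term rather than by $\alpha_E$.
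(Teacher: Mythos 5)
Your proposal is correct and follows the same route as the paper, which simply deduces the corollary from Theorem \ref{18} after noting that $L(E,1)\neq 0$ forces $w_E=+1$ (so $\epsilon=+1$ and $M\equiv 1 \bmod 4$), that $r(M)=2k(M)$ when every prime factor of $M$ splits, and that hypothesis (iii) makes $\phi_E=-ord_2(L^{(alg)}(E,1))$. Your additional checks (exclusion of $\Q(\sqrt{-1})$ and $\Q(\sqrt{-2})$ via good reduction at $2$, and the integrality of $ord_2$ on $K^\times$ ensuring $-ord_2(L^{(alg)}(E,1))\geq 1\geq\alpha_E$) are exactly the points the paper leaves implicit in calling the corollary ``immediate.''
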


\medskip

We now discuss some numerical examples of this theorem. For basic information about the curves discussed below, see, for example, \cite{GR}. As a first example, let $E$ be the elliptic curve
defined by
\begin{equation}\label{22}
y^2 + xy = x^3 - x^2 - 2x - 1.
\end{equation}
It has conductor 49, and complex multiplication by the ring of integers of $K = \Q(\sqrt{-7})$.
In fact, this curve is isomorphic to the modular curve $X_0(49)$.  By the Chowla-Selberg formula, the period lattice $\frak L$ of the Neron differential on $E$ is given by $\frak L = \Omega_\infty\cO$, where
$$
\Omega_\infty = \frac{\Gamma(\frac{1}{7})\Gamma(\frac{2}{7})\Gamma(\frac{4}{7})}{2\pi i\sqrt{-7}}.
$$
Moreover, $\alpha_E = 1$ because $a_1 = 1$, and $L^{(alg)}(E, 1) = 1/2$, so that $\phi_E = 1$.
Note that any positive square free integer $M$ with $(M, 7) = 1$ and $M \equiv 1 \, mod \, 4$,
will be admissible for $E$, provided each of its prime factors which splits in $K$ (thus a prime factor which is congruent to any of  1, 2, or 4 $mod \, 7$) is congruent to 1 $mod \, 4$.  Theorem \ref{18}  therefore implies that, for such admissible integers $M$, we have
\begin{equation}\label{23}
ord_2(L^{(alg)}(E^{(M)}, 1)) \geq r(M) - 1.
\end{equation}
We see from Table I at the end of this paper that this estimate is in general best possible. As a second example,  take for $E$ the elliptic curve defined by
\begin{equation}\label{22'}
y^2 + y = x^3 - x^2 - 7x +10.
\end{equation}
It has conductor 121, and complex multiplication by the ring of integers of $K = \Q(\sqrt{-11})$.
Again by the Chowla-Selberg formula, the period lattice $\frak L$ of the Neron differential on $E$ is given by $\frak L = \Omega_\infty\cO$, where
$$
\Omega_\infty = \frac{\Gamma(\frac{1}{11})\Gamma(\frac{3}{11})\Gamma(\frac{4}{11})\Gamma(\frac{5}{11})\Gamma(\frac{9}{11})}{2\pi i\sqrt{-11}}.
$$
Moreover, $\alpha_E = 0$ because $a_1 = 0$, and $w_E = -1$, so that $\phi_E = 0$. The split primes for $K$ are those which are congruent to $1, 3, 4, 5, 9 \, mod \, 11$. For example, all special split primes $< 1000$ for this curve are:-
\begin{align*}53,257,269,397,401,421,617,757,773,929.\end{align*}
Let now $M$ be any square free positive integer which is admissible for $E$ (in particular, since we are only interested in twists $E^{(-M)}$ having root number equal to $+1$, we assume that $M \equiv 3 \, mod \, 4$ and $(M, 11) = 1$).  Then
Theorem \ref{18} implies that
\begin{equation}\label{24}
ord_2(L^{(alg)}(E^{(-M)}, 1)) \geq r(M).
\end{equation}
However, in this example, Table II at the end of this paper suggests that this estimate is not, in general, best possible. It seems plausible to speculate from Table II that the lower bound of $\eqref{24}$ could be improved to $r(M)+1$.

\section{Tamagawa Factors}

Our goal in this last section is to relate the estimate given by Theorem \ref{18} to the Tamagawa factors which arise in the Birch-Swinnerton-Dyer conjecture for the twists of our given elliptic curve with complex multiplication. Suppose first that $E$ is any elliptic curve $E$ defined over $\Q$, and any prime $p$ of bad reduction for $E$, let $E(\Q_p)$ denote the group of points on $E$ with coordinates in the field of $p$-adic numbers $\Q_p$, and  $E_0(\Q_p)$ the subgroup of points with non-singular reduction modulo $p$. We define
$$
\frak C_p(E) = E(\Q_p)/E_0(\Q_p),
$$
and recall that the Tamagawa factor $c_p(E)$ is defined by
\begin{equation}\label{25}
c_p(E) = [E(\Q_p):E_0(\Q_p)].
\end{equation}
If $A$ is any abelian group, $A[m]$ will denote the kernel of multiplication by a positive integer $m$
on $A$. The following lemma is very well known, but we give it for completeness.

\begin{lem}\label{use} Let $E$ be any elliptic curve over $\Q$, and let $p$ be a prime number where $E$ has bad additive reduction. Then, for all positive integers $m$ with $(m, p) = 1$, we have
$$
\frak C_p(E)[m] = E(\Q_p)[m].
$$
\end{lem}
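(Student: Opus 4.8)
The plan is to exploit the standard filtration on the group of local points. Write $E_1(\Q_p)$ for the kernel of reduction modulo $p$ (the subgroup identified, via the formal group, with $\hat E(p\Z_p)$), and recall the exact sequence
\[
0 \longrightarrow E_1(\Q_p) \longrightarrow E_0(\Q_p) \longrightarrow \widetilde E_{\mathrm{ns}}(\mathbb{F}_p) \longrightarrow 0 ,
\]
in which $\widetilde E_{\mathrm{ns}}(\mathbb{F}_p)$ is the group of non-singular points of the reduced curve. The first step is to show that, for every $m$ with $(m,p)=1$, multiplication by $m$ is an automorphism of $E_0(\Q_p)$; in particular $E_0(\Q_p)[m]=0$ and $E_0(\Q_p)=mE_0(\Q_p)$. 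For the subgroup $E_1(\Q_p)$ this holds because it is a pro-$p$ group --- the successive quotients in its natural filtration are $\mathbb{F}_p$-vector spaces --- hence a $\Z_p$-module, on which multiplication by a unit of $\Z_p$ is invertible. For the quotient $\widetilde E_{\mathrm{ns}}(\mathbb{F}_p)$ we use the hypothesis of \emph{additive} reduction: the reduced curve then has a cusp, so $\widetilde E_{\mathrm{ns}}(\mathbb{F}_p)\cong(\mathbb{F}_p,+)$, a group killed by $p$, on which multiplication by $m$ is again bijective since $(m,p)=1$. Applying the five lemma (or the snake lemma) to multiplication by $m$ on the displayed sequence gives the claim for $E_0(\Q_p)$.

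The second step deduces the lemma from the exact sequence
\[
0 \longrightarrow E_0(\Q_p) \longrightarrow E(\Q_p) \longrightarrow \frak C_p(E) \longrightarrow 0 .
\]
The snake lemma for multiplication by $m$ yields an exact sequence
\[
0 \to E_0(\Q_p)[m] \to E(\Q_p)[m] \to \frak C_p(E)[m] \to E_0(\Q_p)/mE_0(\Q_p),
\]
and by the first step the two outer $E_0$-terms vanish, so the induced map $E(\Q_p)[m]\to\frak C_p(E)[m]$ is an isomorphism. Concretely: it is injective because its kernel is $E_0(\Q_p)[m]=0$; it is surjective because, given $x\in E(\Q_p)$ with $mx\in E_0(\Q_p)$, the $m$-divisibility of $E_0(\Q_p)$ provides $y\in E_0(\Q_p)$ with $my=mx$, and then $x-y\in E(\Q_p)[m]$ has the same image as $x$ in $\frak C_p(E)$.

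The argument involves no real obstacle: it rests entirely on the structure theory of $E(\Q_p)$ recalled above. The only point that genuinely uses the hypotheses is the additive-reduction assumption, which is exactly what makes $\widetilde E_{\mathrm{ns}}(\mathbb{F}_p)$ a $p$-group (for multiplicative reduction one would get $\mathbb{G}_m(\mathbb{F}_p)$, which can contain $m$-torsion, and the statement would fail); and it is worth noting that we never need the finer structure of the formal group --- delicate at $p=2$ --- but only that $E_1(\Q_p)$ is pro-$p$.
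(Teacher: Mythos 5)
Your argument is correct and is essentially identical to the paper's proof: both use the pro-$p$ nature of $E_1(\Q_p)$ together with the fact that additive reduction makes the non-singular reduced points isomorphic to $(\mathbb{F}_p,+)$ to see that multiplication by $m$ is an automorphism of $E_0(\Q_p)$, and then apply the snake lemma to $0 \to E_0(\Q_p) \to E(\Q_p) \to \frak{C}_p(E) \to 0$. You have merely spelled out the snake-lemma step a bit more explicitly than the paper does.
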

\begin{proof} Let $E_1(\Q_p)$ denote the group of points on the formal group of $E$ at $p$.
Since $E$ has additive reduction modulo $p$, the group of non-singular points on the reduction of $E$ modulo $p$ is isomorphic to the additive group of the field $\mathbb{F}_p$. As $E_1(\Q_p)$  is pro-$p$, and we have the exact sequence
$$
0 \to E_1(\Q_p) \to E_0(\Q_p) \to \mathbb{F}_p \to 0,
$$
it follows immediately that multiplication by $m$ is an isomorphism on $E^0(\Q_p)$, whence the
assertion of the lemma follows easily from a simple application of the snake lemma to the sequence
$$
0 \to E_0(\Q_p) \to E(\Q_p) \to \frak C_p(E) \to 0.
$$
\end{proof}

\medskip

As earlier, let $E$ now be our elliptic curve  defined over $\Q$ with complex multiplication by the ring of integers of the imaginary quadratic field $K$, and write $N(E)$ for the conductor of $E$. Once again, we will assume that $E$ has good reduction at 2, and so we cannot have $K = \Q(\sqrt{-1})$,
or $K = \Q(\sqrt{-2})$.  Let  $M$ denote an odd positive square free integer with $(M, N(E)) =1$.
We put $\epsilon$ equal to $+1$ or $-1$, according as $M$ is congruent to 1 or 3 $mod \, 4$. Thus 2 is always unramified in the quadratic extension $\Q(\sqrt{\epsilon M})/\Q$.

\begin{lem}\label{eq} Let $p$ be any prime number dividing $N(E)$ or $M$. If $p$ divides $N(E)$, then $ord_2(c_p(E^{(\epsilon M)})) = ord_2(c_p(E))$. If $p$ divides $M$, then the value of $ord_2(c_p(E^{(\epsilon M)}))$ is independent of $M$.
\end{lem}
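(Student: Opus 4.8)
I would deduce Lemma~\ref{eq} from a single local statement. First note that, since $E$ has good reduction at $2$ and $M$ is odd, every prime $p$ dividing $N(E)$ or $M$ is odd; moreover, as $\epsilon M\equiv 1\pmod 4$ is square free, the discriminant of $\Q(\sqrt{\epsilon M})$ equals $\epsilon M$, so such a $p$ ramifies in $\Q(\sqrt{\epsilon M})$ precisely when $p\mid M$. The key local input is the following claim $(\star)$: \emph{if $A$ is an elliptic curve over $\Q$ with additive, potentially good reduction at an odd prime $p$, and $d$ is a square free integer prime to $p$, then $ord_2(c_p(A^{(d)}))=ord_2(c_p(A))$.}

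To prove $(\star)$, observe that $\Q_p(\sqrt d)$ is either $\Q_p$ --- in which case $A^{(d)}\cong A$ over $\Q_p$ and there is nothing to prove --- or the unramified quadratic extension $L$ of $\Q_p$. In the latter case fix an isomorphism $\phi\colon A^{(d)}_L\to A_L$ defined over $L$; by the defining property of the quadratic twist, $\phi^{\sigma}=-\phi$, where $\sigma$ is the Frobenius generator of $Gal(L/\Q_p)$, and a short computation then shows that $\phi$ carries $A^{(d)}(\Q_p)$ bijectively onto $\{P\in A(L):\sigma P=-P\}$, while $A(\Q_p)=\{P\in A(L):\sigma P=P\}$. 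Since $A$ still has additive reduction over $L$ and $p$ is odd, neither the formal group of $A$ at $p$ (which is pro-$p$) nor the group of non-singular points of the reduction (the additive group of the residue field) contains any $2$-torsion; hence, by the argument in the proof of Lemma~\ref{use} applied over $L$, the group $A(L)[2^{\infty}]$ embeds into the component group $\frak C_p(A/L)$. Because $A$ has potentially good reduction, this component group is one of $0$, $\Z/2$, $\Z/3$, $\Z/2\times\Z/2$, hence is killed by $2$, so $-1$ acts as the identity on $A(L)[2^{\infty}]$. Therefore $\{P\in A(L)[2^{\infty}]:\sigma P=-P\}=\{P\in A(L)[2^{\infty}]:\sigma P=P\}$, so $A^{(d)}(\Q_p)[2^{\infty}]$ and $A(\Q_p)[2^{\infty}]$ have the same order. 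Finally $A^{(d)}$, being isomorphic to $A$ over the unramified extension $L$, also has additive reduction at $p$, so Lemma~\ref{use} (with $m$ a high power of $2$) gives $ord_2(c_p(A))=ord_2|A(\Q_p)[2^{\infty}]|$ and likewise for $A^{(d)}$, which yields $(\star)$.

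Granting $(\star)$, the lemma is immediate. If $p\mid N(E)$, then $p\nmid M$, and $(\star)$ with $A=E$ and $d=\epsilon M$ gives $ord_2(c_p(E^{(\epsilon M)}))=ord_2(c_p(E))$ --- here $E$ has additive reduction at $p$ because $p\mid N(E)$, and potentially good reduction there because $E$ has complex multiplication. If $p\mid M$, then $E$ has good reduction at $p$, and $\chi_{\epsilon M}$ restricts over $\Q_p$ to one of the two ramified quadratic characters of $\Q_p$. Fix a quadratic field ramified at $p$, say $\Q(\sqrt{p^{*}})$ with $p^{*}=(-1)^{(p-1)/2}p$, and let $E_0$ be the twist of $E$ by it; since the two ramified quadratic characters of $\Q_p$ differ by the unramified one, $E^{(\epsilon M)}$ is isomorphic over $\Q_p$ either to $E_0$ or to the twist of $E_0$ by a suitable square free integer prime to $p$. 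Now $E_0$ has potentially good reduction at $p$ (being a twist of a curve with good reduction) and additive reduction at $p$ (its $2$-adic Tate module is ramified at $p$, whereas good reduction would force it to be unramified), so $(\star)$ applies to $E_0$; in either case $ord_2(c_p(E^{(\epsilon M)}))=ord_2(c_p(E_0))$, which depends only on $E$ and $p$, and not on $M$.

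The only delicate point is $(\star)$ --- more precisely, the observation that for additive reduction at an odd prime of a curve with potentially good reduction the prime-to-$p$ part of the group of local points contains no element of order $4$, so that inversion acts trivially on its $2$-primary part; given this, and careful bookkeeping of the sign in $\phi^{\sigma}=-\phi$, the remaining steps are routine.
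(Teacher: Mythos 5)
Your argument is correct, but it takes a genuinely different route from the paper's. The paper treats both cases at once and more directly: since the $j$-invariants are integral, the table from \cite{S} shows the $2$-primary part of $\frak C_p$ is elementary abelian, so Lemma \ref{use} identifies $ord_2(c_p(E^{(\epsilon M)}))$ with $ord_2\#E^{(\epsilon M)}(\Q_p)[2]$ (and likewise for $E$ when $p\mid N(E)$, where both curves have additive reduction); one then simply observes that the quadratic character $\chi_{\epsilon M}$ is trivial on $2$-torsion, so $E[2]$ and $E^{(\epsilon M)}[2]$ are isomorphic Galois modules and $\#E(\Q_p)[2]=\#E^{(\epsilon M)}(\Q_p)[2]$, which gives the equality for $p\mid N(E)$ and, for $p\mid M$, independence of $M$ with common value $ord_2\#E(\Q_p)[2]$. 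You instead prove a local invariance statement $(\star)$ for twists unramified at $p$, using the Galois-descent description $A^{(d)}(\Q_p)\cong\{P\in A(L):\sigma P=-P\}$ together with the fact that inversion acts trivially on $A(L)[2^\infty]$ (proved, as in the paper, from Lemma \ref{use} over $L$ plus the component-group structure at primes of potentially good reduction), and for $p\mid M$ you add a reduction to a fixed ramified local twist $E_0=E^{(p^*)}$. Both proofs rest on the same two pillars, Lemma \ref{use} and the component-group table; yours yields a reusable local lemma (invariance of $ord_2(c_p)$ under locally unramified quadratic twists) at the cost of the extra bookkeeping with $\phi^\sigma=-\phi$ and the $E_0$ reduction, while the paper's comparison of $2$-torsion is shorter and uniform. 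One small imprecision to fix: the component group over $L$ can be $\Z/3\Z$, which is not killed by $2$; what you actually need (and what is true) is that its $2$-primary subgroup is killed by $2$, so that the $2$-group $A(L)[2^\infty]$, which embeds into it, is elementary abelian and inversion acts trivially on it.
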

\begin{proof} Let $p$ be any prime factor of $N(E)$ or $M$, so that, in particular, $p$ is odd.  Since the $j$-invariant of $E$, and so also the $j$-invariant of $E^{(\epsilon M)}$, are integral, it follows from the table on p. 365 of \cite{S} that
the 2-primary subgroups of $\frak C_p(E)$ and $\frak C_p(E^{(\epsilon M)})$ are either $0, \Z/2\Z$, or $\Z/2\Z \times \Z/2Z$. Now when $p$ divides $N(E)$, both $E$ and $E^{(\epsilon M)}$ have additive reduction at $p$, and so we conclude from Lemma \ref{use} that, in this case,
$$
ord_2(c_p(E)) = ord_2(\#(E(\Q_p)[2])),  \, ord_2(c_p(E^{(\epsilon M)})) = ord_2(\#(E^{(\epsilon M)}(\Q_p)[2])),
$$
Also when $p$ divides $M$, we have, again from Lemma \ref{use}, that
\begin{equation}\label{cr}
ord_2(c_p(E^{(\epsilon M)})) = ord_2(\#(E^{(\epsilon M)}(\Q_p)[2])).
\end{equation}
But the Galois group of $\Q(\sqrt{\epsilon M})/\Q$ clearly acts trivially on points of order 2 on $E^{(\epsilon M)}$, and so we always have
\begin{equation}\label{cr'}
\#(E(\Q_p)[2]) = \#(E^{(\epsilon M)}(\Q_p)[2])).
\end{equation}
The assertions of the lemma now follow immediately.
\end{proof}

\medskip

\begin{thm}\label{26} Assume that $E$ is defined over $\Q$ and has good reduction at 2, and that $K \neq \Q(\sqrt{-3})$. Let $M$ be an odd positive square free integer with $(M, N(E))=1$, and having the property that every prime factor of M which is inert in $K$ is congruent to 1 $mod \, 4$. Let $p$ be any prime dividing M. Then (i) $ord_2(c_p(E^{(\epsilon M)})) = 1$
if $p$ is inert in $K$,  (ii) $ord_2(c_p(E^{(\epsilon M)})) = 0$ if $p$ splits in $K$ and the trace of the Frobenius endomorphism of the reduction of $E$ modulo $p$ is odd, and (iii) $ord_2(c_p(E^{(\epsilon M)})) = 2$ if $p$ splits in $K$ and the trace of the Frobenius endomorphism of the reduction of $E$ modulo $p$ is even.
\end{thm}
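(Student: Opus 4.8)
The plan is to reduce the computation of $ord_2(c_p(E^{(\epsilon M)}))$ to a count of the $2$-division points of the reduction of $E$ modulo $p$, and then to extract that count from the theory of complex multiplication. Since $p\mid M$ with $M$ odd and square free, $\epsilon M$ has odd $p$-adic valuation and $p\neq 2$, so the quadratic twist $E^{(\epsilon M)}$ has bad additive reduction at $p$; thus \eqref{cr} and \eqref{cr'}, established in the proof of Lemma \ref{eq}, give $ord_2(c_p(E^{(\epsilon M)}))=ord_2(\#E(\qp)[2])$. As $(M,N(E))=1$, the curve $E$ has good reduction at the odd prime $p$, so reduction modulo $p$ identifies $E(\qp)[2]$ with $\widetilde E(\mathbb F_p)[2]$, where $\widetilde E$ denotes the reduction. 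Hence the theorem comes down to computing $ord_2(\#\widetilde E(\mathbb F_p)[2])$ in each of the three cases.

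If $p$ is inert in $K$, then by the theory of complex multiplication (Deuring) $\widetilde E$ is supersingular, so $a_p\equiv 0\bmod p$; since $p\equiv 1\bmod 4$ forces $p\geq 5$, this gives $a_p=0$ and $\#\widetilde E(\mathbb F_p)=p+1\equiv 2\bmod 4$. Therefore $\widetilde E(\mathbb F_p)$ has a unique point of order $2$, so $\#\widetilde E(\mathbb F_p)[2]=2$ and $ord_2(c_p(E^{(\epsilon M)}))=1$, proving (i). If instead $p$ splits in $K$, then $\widetilde E$ is ordinary; let $\pi\in\cO$ be its Frobenius endomorphism, so $\pi\bar\pi=p$ and $\pi+\bar\pi=a_p$. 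By complex multiplication $\widetilde E[2]\cong\cO/2\cO$ as $\cO$-modules with $\pi$ acting by multiplication, and $\widetilde E(\mathbb F_p)=\ker(\pi-1)$; since for any endomorphism of a finite abelian group the kernel and cokernel have equal order, $\#\widetilde E(\mathbb F_p)[2]=\#\bigl((\cO/2)[\pi-1]\bigr)=\#\cO/(2,\pi-1)$.

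It remains to evaluate $\#\cO/(2,\pi-1)$, using that $2$ is unramified in $K$ — indeed good reduction at $2$ excludes $K=\Q(\sqrt{-1})$ and $K=\Q(\sqrt{-2})$, and $K\neq\Q(\sqrt{-3})$ by hypothesis. Note $\pi$ is a unit modulo $2$ because $N_{K/\Q}(\pi)=p$ is odd. If $2$ splits in $K$ as $\frak q\bar{\frak q}$, then the images of $\pi$ in the residue fields $\mathbb F_2$ at $\frak q$ and $\bar{\frak q}$ are nonzero, hence equal to $1$, so $\pi\equiv 1\bmod 2$; thus $\#\cO/(2,\pi-1)=4$, while $a_p=\pi+\bar\pi\equiv 2\equiv 0\bmod 2$, so the hypothesis of (ii) cannot arise and we are in case (iii) with $ord_2(c_p)=2$. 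If $2$ is inert in $K$, then $\cO/2\cO$ is the field $\mathbb F_4$ and complex conjugation reduces to the automorphism $x\mapsto x^2$ of $\mathbb F_4$, so $a_p\equiv\pi+\pi^2\bmod 2$; when $\pi\equiv 1\bmod 2$ this gives $a_p$ even and $\#\cO/(2,\pi-1)=\#\cO/2=4$, whereas when $\pi\not\equiv 1\bmod 2$ the element $\pi$ generates $\mathbb F_4^{\times}$, so $\pi-1$ is a unit, $\#\cO/(2,\pi-1)=1$, and $a_p\equiv\pi+\pi^2=-1\equiv 1\bmod 2$ is odd. Together these yield $ord_2(c_p(E^{(\epsilon M)}))=0$ when $a_p$ is odd and $=2$ when $a_p$ is even, establishing (ii) and (iii).

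The first paragraph is routine bookkeeping with Lemmas \ref{use} and \ref{eq}; the substance is the input from complex multiplication — Deuring's dichotomy (inert implies supersingular with $a_p=0$, split implies ordinary with Frobenius an element $\pi$ of $\cO$) together with the identification $\widetilde E[2]\cong\cO/2\cO$ — and the only delicate step is matching the condition $\pi\equiv 1\bmod 2$, hence the order of $\cO/(2,\pi-1)$, against the parity of $a_p=\pi+\bar\pi$ according to how $2$ behaves in $K$.
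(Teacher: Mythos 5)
Your argument is correct, and its first half (the reduction via \eqref{cr} and \eqref{cr'} of Lemma \ref{eq} to counting $E(\qp)[2]$, the identification $E(\qp)[2]=\tilde{E}(\mathbb{F}_p)[2]$ by reduction modulo the odd good prime $p$, and the inert case via $\#\tilde{E}(\mathbb{F}_p)=p+1\equiv 2 \bmod 4$) is exactly the paper's route. Where you diverge is in the split case: the paper disposes of (ii) simply by observing that $\#\tilde{E}(\mathbb{F}_p)=1-a_p+p$ is odd when $a_p$ is odd, and proves (iii) by a mod-2 Galois-representation argument --- the Frobenius image $\phi_p\in(\cO/2\cO)^{*}$ has order dividing $2$ by Cayley--Hamilton (characteristic polynomial $X^2-1$ when $a_p$ is even), and $(\cO/2\cO)^{*}$ has odd order since $2$ is unramified in $K$, forcing $\phi_p=1$ and $E(\qp)[2]=E[2]$. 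You instead work with the Frobenius endomorphism $\pi\in\cO$ of the ordinary reduction, use $\tilde{E}[2]\cong\cO/2\cO$ and the kernel--cokernel equality to get $\#\tilde{E}(\mathbb{F}_p)[2]=\#\,\cO/(2,\pi-1)$, and then analyse the two possible behaviours of $2$ in $K$; the two mechanisms are of course closely related (the paper's $\phi_p$ is the image of your $\pi$ in $(\cO/2\cO)^{*}$), but yours treats (ii) and (iii) in one unified computation, derives the parity of $a_p$ from $\pi \bmod 2$ rather than taking it as an input, and yields the extra observation that case (ii) is vacuous when $2$ splits in $K$ (i.e.\ $K=\Q(\sqrt{-7})$), consistent with Table I. Two small points you should make explicit: the freeness of $\tilde{E}[2]$ over $\cO/2\cO$ (e.g.\ by reducing the prime-to-$p$ torsion of $E$, whose Tate module at $2$ is free of rank one over $\cO\otimes\Z_2$), and the fact that complex conjugation induces the nontrivial automorphism $x\mapsto x^2$ of $\cO/2\cO\cong\mathbb{F}_4$ when $2$ is inert --- this holds because $K_v/\Q_2$ is then unramified quadratic with conjugation generating its Galois group, and it is genuinely needed, since otherwise your congruence $a_p\equiv\pi+\pi^2 \bmod 2$ would fail.
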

\noindent Before giving the proof of this theorem, we state an important corollary.

\begin{cor} Assume that $E$ is defined over $\Q$ and has good reduction at 2, and that $K \neq \Q(\sqrt{-3})$. Let $M$ be a positive integer which is admissible for $E$ in the sense of Definition \ref{ad}, and has the property that every prime factor of $M$ is congruent to 1 $mod \, 4$.  Write $r(M)$ for the number of primes divisors of $M$ in $K$. Then
\begin{equation}\label{27}
ord_2(\prod_{p|M} {c_p(E^{(\epsilon M)}})) = r(M).
\end{equation}
\end{cor}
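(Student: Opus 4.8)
The plan is to read \eqref{27} off term by term from Theorem \ref{26}. First note that, since every prime factor of $M$ is $\equiv 1 \bmod 4$, we have $M \equiv 1 \bmod 4$, so $\epsilon = +1$ and $E^{(\epsilon M)} = E^{(M)}$; moreover $M$ is odd, squarefree and prime to $N(E)$, so all the hypotheses of Theorem \ref{26} are in force (its requirement that the inert prime factors of $M$ be $\equiv 1 \bmod 4$ is automatic here, since \emph{every} prime factor of $M$ is). By additivity of $ord_2$,
\[
ord_2\!\left(\prod_{p \mid M} c_p(E^{(M)})\right) = \sum_{p \mid M} ord_2\!\left(c_p(E^{(M)})\right),
\]
while, since a rational prime inert in $K$ stays a single prime of $K$ and a split one becomes a product of two, $r(M) = \#\{p \mid M : p \text{ inert in } K\} + 2\,\#\{p \mid M : p \text{ split in } K\}$. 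So I would reduce everything to showing that $ord_2(c_p(E^{(M)})) = 1$ for each inert $p \mid M$ and $ord_2(c_p(E^{(M)})) = 2$ for each split $p \mid M$; summing then gives \eqref{27}.

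The inert case is immediate from part (i) of Theorem \ref{26}. For the split case, part (iii) of Theorem \ref{26} reduces the problem to checking that the trace $a_p$ of the Frobenius endomorphism of $E \bmod p$ is even. This is the step where admissibility is used essentially: condition (iii) in the definition of admissibility forces a split prime factor $p$ of $M$ to be a \emph{special} split prime for $K$, so $p = \pi\pi^*$ with $\pi, \pi^* \in \cO$ both $\equiv 1 \bmod 4$. Since $E$ has good reduction at $2$ one cannot have $K = \Q(\sqrt{-1})$ or $\Q(\sqrt{-2})$, and $K \neq \Q(\sqrt{-3})$ by hypothesis, so $\cO$ has only the units $\pm 1$. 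Writing $\frak p = \pi\cO$, the Grossencharacter value $\psi_E(\frak p)$ is a generator of $\frak p$ and hence equals $\pm \pi$, so $a_p = \psi_E(\frak p) + \overline{\psi_E(\frak p)} = \pm(\pi + \pi^*)$. It then remains to observe that $\pi + \pi^*$ is even in $\cO$ for a special split prime, which is exactly the computation from the proof of Lemma \ref{split}: with $\pi = a + b\tau$, $\tau = (1+\sqrt{-q})/2$, one has $\pi + \pi^* = 2a + b$, and $\pi \equiv 1 \bmod 4$ forces $b$ to be even.

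I expect the one genuinely substantive point to be the identification $a_p = \pm(\pi + \pi^*)$ — i.e. relating the trace of Frobenius to the given generator $\pi$ of the prime above $p$ — which depends on $\cO$ having no roots of unity other than $\pm 1$; once that is granted (together with Theorem \ref{26} and the parity computation in Lemma \ref{split}), the rest is pure bookkeeping, and combining the inert and split contributions gives $\sum_{p \mid M} ord_2(c_p(E^{(M)})) = \#\{p \mid M : p \text{ inert in } K\} + 2\,\#\{p \mid M : p \text{ split in } K\} = r(M)$, which is \eqref{27}.
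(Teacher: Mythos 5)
Your proposal is correct and follows exactly the route the paper intends: the paper states the corollary as an immediate consequence of Theorem \ref{26} (the proof printed after the corollary is in fact the proof of that theorem), the implicit bridge being precisely what you supply, namely that admissibility forces each split prime factor $p$ to be a special split prime, so that $\psi_E(\frak p)=\pm\pi$ with $\pi\equiv 1 \bmod 4$ and hence $a_p=\pm(\pi+\pi^*)$ is even, putting such $p$ in case (iii), while inert factors fall under case (i). Your bookkeeping of $r(M)$ and the use of $\cO^{\times}=\{\pm 1\}$ (valid since good reduction at $2$ rules out $\Q(\sqrt{-1})$, $\Q(\sqrt{-2})$ and $K\neq\Q(\sqrt{-3})$ by hypothesis) are both correct, so there is nothing to add.
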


\begin{proof}  Let $p$ be any prime factor of $M$. Since $(p, N(E))=1$, $E$ has good reduction at $p$ and $p$ does not ramify in $K$. Recalling \eqref{cr} and \eqref{cr'}, we have to compute the order of $E(\Q_p)[2]$. Now, since $p$ is odd,  the Galois module $E[2]$ is unramified at $p$. Let $\tilde{E}$ denote the reduction of $E$
modulo $p$. Since the formal group of $E$ at $p$ is a $\Z_p$-module, it follows easily that reduction modulo $p$ defines an isomorphism
$$
E(\Q_p)[2] = \tilde{E}(\mathbb{F}_p)[2].
$$
Now the order of  $\tilde{E}(\mathbb{F}_p)$ is $1+p$ or $1 - a_p + p$, according as $p$ is inert or splits in $K$, where $a_p$ is the trace of the Frobenius endomorphism of $\tilde{E}$. In particular, when $p$ splits in $K$ and $a_p$ is odd, we see immediately that $E(\Q_p)[2] =0$. Similarly, if $p$ is inert in $K$, then, as $p \equiv 1 \, mod \, 4$, we conclude that  $E(\Q_p)[2]$ must have order 2. Suppose next that $p$ splits in $K$ and $a_p$ is even. Let $\tau_p$ be any Frobenius automorphism at $p$.
Since $p$ splits in $K$, we can view $\tau_p$ as an element of the absolute Galois group of $K$,
and we write $\phi_p$ for its image in the $\cO$-automorphism group of the module $E[2]$, which is equal to
$(\cO/2\cO)^*$. Then $\phi_p$ must have order dividing 2 because, since $a_p$ is even, its characteristic polynomial is equal to  $X^2 - 1$. But 2 is not ramified in $K$ because $E$ has good reduction at 2. Thus
the group $(\cO/2\cO)^*$ has no element of order 2, whence we must have $\phi_p =1$ and $E(\Q_p)[2] = E[2]$. The assertions of the theorem are now clear from \eqref{cr} and \eqref{cr'}.
\end{proof}

\medskip

Finally, we now compare some of our estimates with those predicted by the conjecture of Birch and Swinnerton-Dyer. The next proposition is an immediate consequence of  Theorems \ref{18} and  \ref{26}.

\begin{prop}\label{bsd} Assume that $E$ is defined over $\Q$ and has good reduction at 2, and that $K \neq \Q(\sqrt{-3})$. Assume further that $L(E, 1) \neq 0$, and that $ord_2(L^{(alg)}(E, 1)) < 0$. Then, for all positive integers $M$, which are admissible for $E$, and have the property that all of their prime factors are $\equiv 1 \, mod \, 4$, we have
\begin{equation}\label{28}
ord_2(\frac{L^{(alg)}(E^{(M)}, 1)}{L^{(alg)}(E, 1)}) \geq ord_2(\displaystyle\prod_{p|M} {c_p(E^{(M)})}).
\end{equation}
\end{prop}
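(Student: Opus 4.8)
The plan is to simply juxtapose the two already-established estimates and check that the exponents line up. First I would invoke Theorem \ref{18} in the special case $w_E = +1$ (which holds since $L(E,1) \neq 0$, so every admissible $M$ is $\equiv 1 \bmod 4$ and $\epsilon = +1$). That theorem gives $ord_2(L^{(alg)}(E^{(M)}, 1)) \geq r(M) - \phi_E$, where $r(M)$ is the number of primes of $K$ dividing $M$. Since all prime factors of $M$ are $\equiv 1 \bmod 4$ and none ramify in $K$, each rational prime $p \mid M$ that is inert contributes one prime of $K$ and each split $p$ contributes two, so $r(M)$ is exactly the quantity appearing in the corollary to Theorem \ref{26}, namely $r(M) = ord_2(\prod_{p \mid M} c_p(E^{(M)}))$ (here $\epsilon = 1$ so $E^{(\epsilon M)} = E^{(M)}$).

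Next I would pin down $\phi_E$. By definition \eqref{13*}, since $L(E,1) \neq 0$ we have $\phi_E = \max\{\alpha_E, -ord_2(L^{(alg)}(E,1))\}$. Hypothesis (iii), $ord_2(L^{(alg)}(E,1)) < 0$, forces $-ord_2(L^{(alg)}(E,1)) \geq 1 \geq \alpha_E$, hence $\phi_E = -ord_2(L^{(alg)}(E,1))$, i.e. $ord_2(L^{(alg)}(E,1)) = -\phi_E$. Substituting into the Theorem \ref{18} bound gives
$$
ord_2(L^{(alg)}(E^{(M)},1)) \;\geq\; r(M) - \phi_E \;=\; r(M) + ord_2(L^{(alg)}(E,1)),
$$
and therefore
$$
ord_2\!\left(\frac{L^{(alg)}(E^{(M)},1)}{L^{(alg)}(E,1)}\right) \;=\; ord_2(L^{(alg)}(E^{(M)},1)) - ord_2(L^{(alg)}(E,1)) \;\geq\; r(M).
$$
Finally, rewriting $r(M)$ via the corollary to Theorem \ref{26} as $ord_2(\prod_{p\mid M} c_p(E^{(M)}))$ yields \eqref{28}, completing the proof.

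There is essentially no obstacle here: the proposition is a formal consequence of the two cited theorems, and the only thing to be careful about is the bookkeeping — checking that the hypotheses of both Theorem \ref{18} and Theorem \ref{26} (good reduction at 2, $K \neq \Q(\sqrt{-3})$, admissibility of $M$, all prime factors $\equiv 1 \bmod 4$) are exactly the hypotheses assumed in Proposition \ref{bsd}, and that the definition \eqref{13*} of $\phi_E$ collapses to $-ord_2(L^{(alg)}(E,1))$ under hypothesis (iii). If anything deserves a sentence of care it is the identification $r(M) = ord_2(\prod_{p\mid M} c_p(E^{(M)}))$, but this is precisely the content of the corollary following Theorem \ref{26}, so it can be cited directly.
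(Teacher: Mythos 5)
Your proposal is correct and follows exactly the route the paper intends: the paper simply declares the proposition an immediate consequence of Theorem \ref{18} and Theorem \ref{26} (via its corollary), and your write-up supplies precisely that bookkeeping — $w_E=+1$ so $\epsilon=+1$, $\phi_E=-ord_2(L^{(alg)}(E,1))$ under hypothesis (iii), and $r(M)=ord_2(\prod_{p\mid M}c_p(E^{(M)}))$ from the corollary. Nothing further is needed.
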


As we shall now explain, the lower bound given by \eqref{28} is exactly what the conjecture of Birch and Swinnerton-Dyer would predict for elliptic curves satisfying the hypotheses of this proposition. We first establish a preliminary result.

\begin{prop}\label{29} Let $E$ be an elliptic curve defined over $\Q$, with complex multiplication by the ring of integers of $K$. Assume that $E$ has good reduction at 2, and that $K \neq \Q(\sqrt{-3})$. Let  $M$ denote an odd positive square free integer with $(M, N(E)) =1$, and put $\epsilon$ equal to $+1$ or $-1$, according as $M$ is congruent to 1 or 3 $mod \, 4$. Then the 2-primary subgroups of
$E(\Q)$ and $E^{(\epsilon M)}(\Q)$ have the same order, and this order is equal to 2 or 1, according as the prime 2 splits or is inert in $K$.
\end{prop}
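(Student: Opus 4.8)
The plan is to reduce the proposition to the following single-curve assertion: if $A$ is any elliptic curve over $\Q$ with complex multiplication by $\cO$, with good reduction at $2$, and with $K\neq\Q(\sqrt{-3})$, then the $2$-primary subgroup of $A(\Q)$ has order $2$ if $2$ splits in $K$ and order $1$ if $2$ is inert in $K$. Granting this, the proposition follows by applying it to $A=E$ and to $A=E^{(\epsilon M)}$: the twisted curve again has complex multiplication by $\cO$, and since $\epsilon M\equiv 1\bmod 4$ the extension $\Q(\sqrt{\epsilon M})/\Q$ is unramified at $2$, so $E^{(\epsilon M)}$ still has good reduction at $2$. Two standard facts will be used: (i) for any rational prime $\ell$ with $\ell\nmid 2N(A)$, reduction modulo $\ell$ is injective on $A(\Q)[2^{\infty}]$, because the kernel of reduction is a pro-$\ell$ group; and (ii) for an ideal $\mathfrak a$ of $K$ prime to $2\mathfrak f$, where $\mathfrak f$ is the conductor of the Grossencharacter $\psi_A$, the Artin symbol $\sigma_{\mathfrak a}$ acts on $A[2]$ by multiplication by $\psi_A(\mathfrak a)\bmod 2\cO$.

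Suppose first that $2$ is inert in $K$, so $\cO/2\cO\cong\mathbb F_4$ and $A[2]$ is free of rank one over $\cO/2\cO$. By (ii), $G_K=\mathrm{Gal}(\overline{\Q}/K)$ acts on $A[2]$ through a character with values in $(\cO/2\cO)^{\times}\cong\Z/3\Z$, and the crucial point is that this character is nontrivial. Since $A$ has good reduction at $2$, the conductor $\mathfrak f$ is prime to $2$, so by the Chinese Remainder Theorem there is $\alpha\in\cO$ with $\alpha\equiv 1\bmod\mathfrak f$ and $\alpha\not\equiv 1\bmod 2\cO$; as $\psi_A((\alpha))=\alpha$ for such $\alpha$, the character $\mathfrak a\mapsto\psi_A(\mathfrak a)\bmod 2\cO$ is nontrivial, hence so is the $G_K$-action on $A[2]$. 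But every nonidentity element of $(\cO/2\cO)^{\times}$ fixes no nonzero vector of $\mathbb F_4$, so $A[2]^{G_K}=0$; a fortiori $A(\Q)[2]=0$, and therefore $A(\Q)[2^{\infty}]=0$, since a point of order $2^k$ with $k\geq 1$ would produce a rational point of order $2$. This settles the inert case.

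Now suppose $2$ splits in $K$, and write $2\cO=\mathfrak p\overline{\mathfrak p}$ with $\mathfrak p\neq\overline{\mathfrak p}$, so that $A[2]=A[\mathfrak p]\oplus A[\overline{\mathfrak p}]$ with each summand of order $2$. Since $(\cO/2\cO)^{\times}$ is trivial, $G_K$ acts trivially on $A[2]$ by (ii); and complex conjugation $c$, which sends $[\pi]$ to $[\overline{\pi}]$ for $\pi$ a generator of $\mathfrak p$, interchanges $A[\mathfrak p]=\ker[\pi]$ and $A[\overline{\mathfrak p}]=\ker[\overline{\pi}]$. Hence $A(\Q)[2]=A[2]^{c}$ is the fixed subgroup of this swap, a group of order exactly $2$; in particular $\#A(\Q)[2^{\infty}]\geq 2$. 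For the matching upper bound, choose a rational prime $\ell$ that is inert in $K$, satisfies $\ell\equiv 1\bmod 4$, and does not divide $2N(A)$; infinitely many such $\ell$ exist because $2$ is unramified in $K$, so the splitting condition at $\ell$ and the condition $\ell\equiv 1\bmod 4$ are congruence conditions to coprime moduli. Since $A$ has complex multiplication and $\ell$ is inert, $\widetilde A$ is supersingular at $\ell$, so $\#\widetilde A(\mathbb F_{\ell})=\ell+1\equiv 2\bmod 4$; its $2$-primary part is cyclic of order $2$, and by (i) this forces $\#A(\Q)[2^{\infty}]\leq 2$. Combining the two bounds, $A(\Q)[2^{\infty}]$ has order exactly $2$.

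Applying the single-curve assertion to $E$ and to $E^{(\epsilon M)}$ gives the proposition. I expect the main obstacle to be the nontriviality of the $G_K$-action on $A[2]$ in the inert case — equivalently, the fact that the full $2$-torsion of $A$ is not rational over $K$. This is where the complex-multiplication structure genuinely enters, via the normalization $\psi_A((\alpha))=\alpha$ for $\alpha\equiv 1\bmod\mathfrak f$ together with $2\nmid\mathfrak f$; the remaining ingredients — injectivity of reduction at odd good primes, the description of the $G_K$-action on $A[2]$ in each case, and the existence of the auxiliary inert prime — are routine.
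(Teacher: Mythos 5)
Your proof is correct, but it takes a genuinely different route from the paper's. The paper argues locally at $2$: writing $A$ for $E$ or $E^{(\epsilon M)}$, it observes that the formal group of $A$ at a prime $v\mid 2$ of $K$ is a Lubin--Tate group with parameter $\psi_A(v)$, so $K(A[\pi^n])/K$ has Galois group $(\cO/\pi^n\cO)^*$; this shows at once that the $2$-primary part of $A(K)$ is trivial when $2$ is inert and killed by $2$ when $2$ splits. For the exact count in the split case the paper then notes that $2$ splits only for $K=\Q(\sqrt{-7})$, so $A$ is a quadratic twist of the explicit curve \eqref{22} of conductor $49$, which has exactly one rational point of order $2$, and quadratic twisting does not change the $2$-torsion Galois module. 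You instead stay global: in the inert case you use the main theorem of complex multiplication to see that $G_K$ acts on $A[2]\cong\cO/2\cO$ through the mod-$2$ reduction of $\psi_A$, and you show this character is nontrivial because $\mathfrak f$ is prime to $2$; in the split case you get the lower bound from the triviality of $(\cO/2\cO)^{\times}$ together with complex conjugation interchanging $A[\mathfrak p]$ and $A[\overline{\mathfrak p}]$, and the upper bound by reducing modulo an auxiliary inert prime $\ell\equiv 1\bmod 4$, where supersingularity gives $\#\widetilde A(\mathbb F_\ell)=\ell+1\equiv 2\bmod 4$. Your route avoids both Lubin--Tate theory and any appeal to the explicit model $X_0(49)$ (and to the fact that every such curve is a twist of it), at the cost of invoking the CM description of the Galois action on torsion and an auxiliary-prime argument; the paper's route is shorter and yields the slightly stronger statement that the $2$-primary part of $A(K)$, not just of $A(\Q)$, has exponent dividing $2$. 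One small imprecision to fix: in your Chinese Remainder choice you should require $\alpha$ to be congruent to a \emph{nontrivial unit} of $\cO/2\cO$ (not merely $\alpha\not\equiv 1\bmod 2\cO$), since you need the ideal $(\alpha)$ to be prime to $2\mathfrak f$ for the Artin symbol to act on $A[2]$ in the stated way; with that adjustment the argument is complete.
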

\begin{proof} Let $A$ denote the elliptic curve $E$ or $E^{(\epsilon M)}(\Q)$, so that $A$ also has good reduction at $2$. In order to show that the 2-primary subgroup of $A(\Q)$ is annihilated by 2,
it suffices to prove that the 2-primary subgroup of $A(K)$ is annihilated by 2. Now, as $E$ has good reduction at 2, the prime 2 does not ramify in $K$, and thus it either splits or is inert in $K$. Let $v$ denote any prime of $K$ above 2. Since $A$ has good reduction at $v$, the formal group of $A$ at $v$
is a Lubin-Tate formal group with parameter $\pi = \psi_A(v)$. Let $n$ be any integer $\geq 1$. As the group $A[{\pi^n}]$ of $\pi^n$-division points on $A$ lies on the formal group of $A$ at $v$, it follows from Lubin-Tate theory the extension $K(A[{\pi^n}])/K$ has Galois group isomorphic to $(\cO/\pi^n\cO)^*$, which is non-trivial for all $n\geq 1$ if $2$ is inert in $K$, and which is non-trivial for all $n \geq 2$ if 2 splits in $K$. In particular, the 2-primary subgroup of $A(K)$ must be trivial if 2 is inert in $K$, and it must be killed by 2 when 2 splits in $K$. But 2 splits in $K$ happens precisely when $K = \Q(\sqrt{-7})$, and then $A$ must be a quadratic twist of the curve given by $\eqref{22}$.
Now the curve $\eqref{22}$ has a unique rational point of order 2 given by $(2, -1)$. It follows that
$A(\Q)$ must also have a unique point of order 2, because $A$ is a quadratic twist of $\eqref{22}$.
This completes the proof.
\end{proof}

Now assume that $E$ satisfies the hypotheses of Proposition \ref{bsd}. Since $L(E, 1)\neq 0$, we know that both $E(\Q)$ and the Tate-Shafarevich group
of $E/\Q$ are finite, and we write $w(E)$ and $t(E)$ for their respective orders. Then the conjecture of Birch and Swinnerton-Dyer predicts that
\begin{equation}\label{30}
ord_2(L^{(alg)}(E, 1)) = ord_2(c_\infty(E)\displaystyle\prod_{p|N(E)} {c_p(E)}) + ord_2(t(E)) - 2ord_2(w(E)).
\end{equation}
where $c_\infty(E)$ denotes the number of connected components of $E(\mathbb{R})$. If we recall Proposition \ref{29}, and the fact that the Cassels-Tate theorem implies that $t(E)$ is the square of an integer, we see that the combination of our hypothesis that $ord_2(L^{(alg)}(E, 1)) < 0$ and the conjectural formula \eqref{30} imply that necessarily
\begin{equation}\label{31}
ord_2(t(E)) = 0.
\end{equation}
Suppose now that $L(E^{(M)}, 1)\neq 0$. Again, we then know that both $E^{(M)}(\Q)$ and the Tate-Shafarevich group
of $E^{(M)}/\Q$ are finite, and we write $w(E^{(M)})$ and $t(E^{(M)})$ for their respective orders. Then, in this case, the conjecture of Birch and Swinnerton-Dyer predicts that
\begin{equation}\label{32}
ord_2(L^{(alg)}(E^{(M)}, 1)) = ord_2(c_\infty(E^{(M)})\displaystyle\prod_{p|N(E)M} {c_p(E^{(M)})}) + ord_2(t(E^{(M)})) - 2ord_2(w(E^{(M)})).
\end{equation}
where $c_\infty(E^{(M)})$ denotes the number of connected components of $E^{(M)}(\mathbb{R})$.
Obviously, $c_\infty(E)= c_\infty(E^{(M)})$ since $\Q(\sqrt{M})$ is a real quadratic field.  Moreover, Proposition \ref{29} shows that $ord_2(w(E))=ord_2(w(E^{(M)}))$, and Lemma \ref{eq} tells us that, for primes $p$ dividing $N(E)$, we have $ ord_2(c_p(E)) =  ord_2(c_p(E^{(M)})$. Hence, recalling \eqref{31}, we conclude that the conjecture of Birch and Swinnerton-Dyer predicts that
\begin{equation}\label{33}
ord_2(\frac{L^{(alg)}(E^{(M)}, 1)}{L^{(alg)}(E, 1)}) = ord_2(\displaystyle\prod_{p|M} {c_p(E^{(M)})}) +
ord_2(t(E^{(M)})).
\end{equation}
This shows that, under the above hypotheses, the lower bound given by \eqref{28} is precisely what the conjecture of the Birch and Swinnerton-Dyer would predict if we ignore the unknown term  $ord_2(t(E^{(M)}))$ giving the order of the 2-primary subgroup of the Tate-Shafarevich group of the curve $E^{(M)}$.

\section{Tables}

In this section, we include some short tables of numerical examples of our results for two elliptic curves $E$ defined over $\Q$. We use the same notation as earlier. For the curve of conductor 49 in Table I, the root number of the curve is $+1$, and for the curve of conductor 121 in Table II the root number is $-1$. As always, $M$ will denote a square free positive integer which is admissible for the elliptic curve $E$, and $r(M)$ will denote the number of prime divisors of $M$ in the field of complex multiplication $K$.

\begin{center}
\tablefirsthead{%

 \hline

\multicolumn{6}{|c|}{Table I:  Case $X_0(49): y^2+xy=x^3-x^2-2x-1$.}\\
\multicolumn{6}{|c|}{The  Tamagawa factor of twists $E^{(M)}$ at $7$ is always $2$.}\\
\multicolumn{6}{|c|}{$L^{(alg)}(E,1)=1/2$.}\\
\hline  $M$&$L(E^{(M)},1)$&$L^{(alg)}(E^{(M)},1)$&$ord_2L^{(alg)}(E^{(M)},1)$&$r(M)$&Tamagawa Factors \\
\hline }

 \tablehead{\hline \multicolumn{6}{|c|}{Table I:  Case $X_0(49): y^2+xy=x^3-x^2-2x-1$.}\\

\hline  $M$&$L(E^{(M)},1)$&$L^{(alg)}(E^{(M)},1)$&$ord_2L^{(alg)}(E^{(M)},1)$&$r(M)$&Tamagawa Factors \\
\hline }

\tabletail{\hline}

 \tablelasttail{\hline}
\begin{supertabular}{|l@{\hspace{0pt}}|l@{\hspace{0pt}}|l@{\hspace{0pt}}|l@{\hspace{0pt}}|l@{\hspace{0pt}}|l@{\hspace{0pt}}|}
$29$&$0.7180139420$&$2$&$1$&$2$&$c_{29}=4$,\\

$37$&$0.6356689731$&$2$&$1$&$2$&$c_{37}=4$,\\

$109$&$0.3703553538$&$2$&$1$&$2$&$c_{109}=4$,\\

$113$&$1.454965333$&$8$&$3$&$2$&$c_{113}=4$,\\

$137$&$0.3303479321$&$2$&$1$&$2$&$c_{137}=4$,\\

$145$&$0.6422111932$&$4$&$2$&$3$&$c_{5}=2$, $c_{29}=4$,\\

$185$&$2.274238456$&$16$&$4$&$3$&$c_{5}=2$, $c_{37}=4$,\\

$233$&$2.279798298$&$18$&$1$&$2$&$c_{233}=4$,\\

$265$&$4.275446184$&$36$&$2$&$3$&$c_{5}=2$, $c_{53}=4$,\\

$277$&$0.9292915388$&$8$&$3$&$2$&$c_{277}=4$,\\

$281$&$0.2306634143$&$2$&$1$&$2$&$c_{281}=4$,\\

$285$&$1.832312031$&$16$&$4$&$3$&$c_{3}=2$, $c_{5}=2$, $c_{19}=2$,\\

$317$&$0.8686848279$&$8$&$3$&$2$&$c_{317}=4$,\\

$337$&$0.2106283985$&$2$&$1$&$2$&$c_{337}=4$,\\

$377$&$0.3982824745$&$4$&$2$&$3$&$c_{13}=2$, $c_{29}=4$,\\

$389$&$1.764410302$&$18$&$1$&$2$&$c_{389}=4$,\\

$401$&$1.737809629$&$18$&$1$&$2$&$c_{401}=4$,\\

$421$&$0.7537907774$&$8$&$3$&$2$&$c_{421}=4$,\\

$449$&$2.919635854$&$32$&$5$&$2$&$c_{449}=4$,\\

$457$&$0.7234920569$&$8$&$3$&$2$&$c_{457}=4$,\\

$481$&$1.410422816$&$16$&$4$&$3$&$c_{13}=2$, $c_{37}=4$,\\

$545$&$0.3312558988$&$4$&$2$&$3$&$c_{5}=2$, $c_{109}=4$,\\

$557$&$0.6553363680$&$8$&$3$&$2$&$c_{557}=4$,\\

$565$&$0.3253401390$&$4$&$2$&$3$&$c_{5}=2$, $c_{113}=4$,\\

$569$&$0.1620972858$&$2$&$1$&$2$&$c_{569}=4$,\\

$613$&$0.1561714487$&$2$&$1$&$2$&$c_{613}=4$,\\

$617$&$0.1556643972$&$2$&$1$&$2$&$c_{617}=4$,\\

$629$&$1.233378974$&$16$&$4$&$3$&$c_{17}=2$, $c_{37}=4$,\\

$641$&$0.1527224426$&$2$&$1$&$2$&$c_{641}=4$,\\

$653$&$0.1513126668$&$2$&$1$&$2$&$c_{653}=4$,\\

$673$&$1.341426413$&$18$&$1$&$2$&$c_{673}=4$,\\

$701$&$0.1460403507$&$2$&$1$&$2$&$c_{701}=4$,\\

$705$&$1.165003700$&$16$&$4$&$3$&$c_{3}=2$, $c_{5}=2$, $c_{47}=2$,\\

$709$&$0.1452140903$&$2$&$1$&$2$&$c_{709}=4$,\\

$757$&$0.1405348183$&$2$&$1$&$2$&$c_{757}=4$,\\

$809$&$0.5437729586$&$8$&$3$&$2$&$c_{809}=4$,\\

$821$&$0.5397843500$&$8$&$3$&$2$&$c_{821}=4$,\\

$877$&$1.175099358$&$18$&$1$&$2$&$c_{877}=4$,\\

$901$&$1.030527220$&$16$&$4$&$3$&$c_{17}=2$, $c_{53}=4$,\\

$953$&$0.5010088727$&$8$&$3$&$2$&$c_{953}=4$,\\

$965$&$0.2489420234$&$4$&$2$&$3$&$c_{5}=2$, $c_{193}=4$,\\

$969$&$0.9937107192$&$16$&$4$&$3$&$c_{3}=2$, $c_{17}=2$, $c_{19}=2$,\\

$977$&$1.113338183$&$18$&$1$&$2$&$c_{977}=4$,\\

$985$&$2.217615590$&$36$&$2$&$3$&$c_{5}=2$, $c_{197}=4$,\\

\end{supertabular}
\end{center}

\begin{center}
\tablefirsthead{%

 \hline

\multicolumn{6}{|c|}{Table II:  Case $E: y^2+y=x^3-x^2-7x+10$ of conductor 121.}\\
\multicolumn{6}{|c|}{The  Tamagawa factor of twists $E^{(-M)}$ at $11$ is always $2$.}\\

\hline  $M$&$|L(E^{(-M)},1)|$&$|L^{(alg)}(E^{(-M)},1)|$&$ord_2|L^{(alg)}(E^{(-M)},1)|$&$r(M)$&Tamagawa factors \\
\hline }

 \tablehead{\hline \multicolumn{6}{|c|}{Table II:  Case $X_0(121): y^2+y=x^3-x^2-7x+10$.}\\

\hline  $M$&$|L(E^{(-M)},1)|$&$|L^{(alg)}(E^{(-M)},1)|$&$ord_2|L^{(alg)}(E^{(-M)},1)|$&$r(M)$&Tamagawa factors  \\
\hline }

\tabletail{\hline}

 \tablelasttail{\hline}
\begin{supertabular}{|l@{\hspace{0pt}}|l@{\hspace{0pt}}|l@{\hspace{0pt}}|l@{\hspace{0pt}}|l@{\hspace{0pt}}|l@{\hspace{0pt}}|}
$7$&$1.094573405$&$4$&$2$&$1$&$c_{7}=2$,\\

$43$&$0.4416311353$&$4$&$2$&$1$&$c_{43}=2$,\\

$79$&$0.3258219706$&$4$&$2$&$1$&$c_{79}=2$,\\

$83$&$0.3178738964$&$4$&$2$&$1$&$c_{83}=2$,\\

$107$&$0.2799638923$&$4$&$2$&$1$&$c_{107}=2$,\\

$119$&$0.5309460896$&$8$&$3$&$2$&$c_{7}=2$, $c_{17}=2$,\\

$127$&$1.027902784$&$16$&$4$&$1$&$c_{127}=2$,\\

$131$&$0.2530219881$&$4$&$2$&$1$&$c_{131}=2$,\\

$139$&$0.2456328864$&$4$&$2$&$1$&$c_{139}=2$,\\

$151$&$0.2356706165$&$4$&$2$&$1$&$c_{151}=2$,\\

$203$&$0.4065143570$&$8$&$3$&$2$&$c_{7}=2$, $c_{29}=2$,\\

$211$&$0.7974669169$&$16$&$4$&$1$&$c_{211}=2$,\\

$227$&$0.1922122148$&$4$&$2$&$1$&$c_{227}=2$,\\

$239$&$0.1873246635$&$4$&$2$&$1$&$c_{239}=2$,\\

$247$&$0.3685321923$&$8$&$3$&$2$&$c_{13}=2$, $c_{19}=2$,\\

$263$&$0.1785730998$&$4$&$2$&$1$&$c_{263}=2$,\\

$271$&$0.7036703591$&$16$&$4$&$1$&$c_{271}=2$,\\

$287$&$0.3418872925$&$8$&$3$&$2$&$c_{7}=2$, $c_{41}=2$,\\

$307$&$2.644506912$&$64$&$6$&$1$&$c_{307}=2$,\\

$323$&$0.3222720533$&$8$&$3$&$2$&$c_{17}=2$, $c_{19}=2$,\\

$347$&$0.6218550501$&$16$&$4$&$1$&$c_{347}=2$,\\

$371$&$0.6014048805$&$16$&$4$&$3$&$c_{7}=2$, $c_{53}=4$,\\

$427$&$0.2802915271$&$8$&$3$&$2$&$c_{7}=2$, $c_{61}=2$,\\

$431$&$0.1394939193$&$4$&$2$&$1$&$c_{431}=2$,\\

$439$&$0.5528682408$&$16$&$4$&$1$&$c_{439}=2$,\\

$491$&$0.5227730093$&$16$&$4$&$1$&$c_{491}=2$,\\

$503$&$0.1291248765$&$4$&$2$&$1$&$c_{503}=2$,\\

$511$&$0.2562202539$&$8$&$3$&$2$&$c_{7}=2$, $c_{73}=2$,\\

$547$&$1.981163103$&$64$&$6$&$1$&$c_{547}=2$,\\

$551$&$0.2467448562$&$8$&$3$&$2$&$c_{19}=2$, $c_{29}=2$,\\

$559$&$0.2449728774$&$8$&$3$&$2$&$c_{13}=2$, $c_{43}=2$,\\

$563$&$0.4882021701$&$16$&$4$&$1$&$c_{563}=2$,\\

$607$&$1.057893809$&$36$&$2$&$1$&$c_{607}=2$,\\

$659$&$0.1128109364$&$4$&$2$&$1$&$c_{659}=2$,\\

$707$&$0.8713129959$&$32$&$5$&$2$&$c_{7}=2$, $c_{101}=2$,\\

$731$&$0.2142225669$&$8$&$3$&$2$&$c_{17}=2$, $c_{43}=2$,\\

$739$&$0.1065299425$&$4$&$2$&$1$&$c_{739}=2$,\\

$743$&$0.4249711969$&$16$&$4$&$1$&$c_{743}=2$,\\

$763$&$0.8387289424$&$32$&$5$&$2$&$c_{7}=2$, $c_{109}=2$,\\

$787$&$1.651682351$&$64$&$6$&$1$&$c_{787}=2$,\\

$811$&$0.9152210367$&$36$&$2$&$1$&$c_{811}=2$,\\

$887$&$0.09723712323$&$4$&$2$&$1$&$c_{887}=2$,\\

$919$&$0.09552920333$&$4$&$2$&$1$&$c_{919}=2$,\\

\end{supertabular}
\end{center}

\footnotesize{ J.C:  Emmanuel College, Cambridge, England, and Department of
Mathematics, POSTECH, Pohang, South Korea, email:
\texttt{jhc13@dpmms.cam.ac.uk}}

\footnotesize{M.K: Merton College, Oxford, England, email:
\texttt{Minhyong.Kim@maths.ox.ac.uk}}

\footnotesize{Z.L:
School of Mathematical Sciences, Capital Normal University, and
Beijing International Center for Mathematical Research, Peking University,
Beijing, People's Republic of China,
email: \texttt{liangzhb@gmail.com}}

\footnotesize{C.Z.:
Department of Mathematics, Peking University,  Beijing,
People's Republic of China,
email: \texttt{zhao@math.pku.edu.cn}}

\end{document}